\documentclass[11pt]{article}
\def\thetitle{Colour-biased Hamilton cycles in random graphs}

\usepackage{graphicx}

\usepackage{amsmath,amssymb}

\usepackage[usenames,dvipsnames,svgnames,table]{xcolor}
\definecolor{CombinatoricaAqua}{HTML}{00698C}
\definecolor{CombinatoricaBlue}{HTML}{3A3293}
\definecolor{CombinatoricaBrown}{HTML}{66220C}
\definecolor{CombinatoricaRed}{HTML}{DF2A27}
\definecolor{HarvardCrimson}{rgb}{0.6471, 0.1098, 0.1882}

\makeatletter
\let\reftagform@=\tagform@	
\def\tagform@#1{\maketag@@@
	{(\ignorespaces\textcolor{CombinatoricaBrown}{#1}\unskip\@@italiccorr)}}
\renewcommand{\eqref}[1]{\textup{\reftagform@{\ref{#1}}}}
\makeatother

\usepackage[backref=page]{hyperref}
\hypersetup{
	unicode,
	pdfencoding=auto,
	pdfauthor={Peleg Michaeli},
	pdftitle={\thetitle},
	pdfsubject={},
	pdfkeywords={},
	colorlinks=true,
	citecolor=CombinatoricaBlue,
	linkcolor=CombinatoricaAqua,
	anchorcolor=CombinatoricaBrown,
	urlcolor=HarvardCrimson}

\usepackage{amsthm}
\usepackage{thmtools}
\usepackage{bbm}
\usepackage{enumitem}
\usepackage[capitalize]{cleveref}
\Crefname{fact}{Fact}{Facts}
\Crefname{claim}{Claim}{Claims}

\makeatletter
\pdfstringdefDisableCommands{\let\cref\@firstofone}
\makeatother

\declaretheoremstyle[
spaceabove=\topsep, spacebelow=\topsep,
headfont=\color{CombinatoricaBrown}\normalfont\bfseries,
bodyfont=\itshape,
]{thm}
\declaretheoremstyle[
spaceabove=\topsep, spacebelow=\topsep,
headfont=\color{CombinatoricaBrown}\normalfont\bfseries,
bodyfont=\normalfont,
]{dfn}
\declaretheoremstyle[
spaceabove=0.5\topsep, spacebelow=0.5\topsep,
headfont=\color{CombinatoricaBrown}\normalfont\bfseries,
bodyfont=\normalfont,
]{rmk}

\declaretheorem[style=thm,parent=section]{theorem}
\declaretheorem[style=thm,sibling=theorem]{lemma}
\declaretheorem[style=thm,sibling=theorem]{corollary}
\declaretheorem[style=thm,sibling=theorem]{claim}


\usepackage[nobysame,msc-links]{amsrefs}


\BibSpecAlias{misc}{webpage}
\BibSpec{book}{%
	+{}  {\PrintPrimary}                {transition}
	+{,} { \textbf}                     {title} 
	+{.} { }                            {part}
	+{:} { \textit}                     {subtitle}
	+{,} { \PrintEdition}               {edition}
	+{}  { \PrintEditorsB}              {editor}
	+{,} { \PrintTranslatorsC}          {translator}
	+{,} { \PrintContributions}         {contribution}
	+{,} { }                            {series}
	+{,} { \voltext}                    {volume}
	+{,} { }                            {publisher}
	+{,} { }                            {organization}
	+{,} { }                            {address}
	+{,} { \PrintDateB}                 {date}
	+{,} { }                            {status}
	+{}  { \parenthesize}               {language}
	+{}  { \PrintTranslation}           {translation}
	+{;} { \PrintReprint}               {reprint}
	+{.} { }                            {note}
	+{.} {}                             {transition}
	+{}  {\SentenceSpace \PrintReviews} {review}
}

\makeatletter
\renewcommand{\PrintNames@a}[4]{%
	\PrintSeries{\name}
	{#1}
	{}{ and \set@othername}
	{,}{ \set@othername}
	{}{ and \set@othername}
	{#2}{#4}{#3}%
}
\makeatother

\makeatletter
\def\mathcolor#1#{\@mathcolor{#1}}
\def\@mathcolor#1#2#3{%
	\protect\leavevmode
	\begingroup
	\color#1{#2}#3%
	\endgroup
}
\makeatother
\definecolor{Red}{rgb}{0.618,0,0}
\definecolor{Blue}{rgb}{0,0,1}
\definecolor{Green}{rgb}{0,0.298,0}

\usepackage{sectsty}
\sectionfont{\color{CombinatoricaBrown}}
\subsectionfont{\color{CombinatoricaBrown}}
\subsubsectionfont{\color{CombinatoricaBrown}}

\usepackage{soul}
\soulregister\ref7

\usepackage{pifont}
\usepackage{calc}

\usepackage[a4paper]{geometry}
\geometry{
	a4paper,
	left=25.63mm,
	right=25.63mm,
	top=36.25mm,
	bottom=36.25mm
}

\title{\thetitle}

\makeatletter
\def\namedlabel#1#2{\begingroup
  #2%
  \def\@currentlabel{#2}%
  \phantomsection\label{#1}\endgroup
}
\makeatother

\usepackage{mleftright}
\mleftright

\newcommand{\defn}[1]{{\bfseries #1}}
\newcommand{\ceil}[1]{\lceil #1 \rceil}
\newcommand{\floor}[1]{\lfloor #1 \rfloor}

\newcommand{\eps}{\varepsilon}
\newcommand{\cA}{\mathcal{A}}
\newcommand{\cB}{\mathcal{B}}

\newcommand{\sm}{\setminus}
\newcommand{\es}{\varnothing}

\newcommand{\pr}[0]{\mathbb{P}}
\newcommand{\E}[0]{\mathbb{E}}
\newcommand{\whp}[0]{\textbf{whp}}
\newcommand{\Dist}[1]{\mathsf{#1}}
\newcommand{\Bin}{\Dist{Bin}}
\newcommand{\Bernoulli}{\Dist{Bernoulli}}

\DeclareMathOperator{\odd}{odd}


\usepackage{comment}
\author{Lior Gishboliner \and Michael Krivelevich \and Peleg Michaeli}
\AtEndDocument{\bigskip{\footnotesize%
\par

\textsc{Lior Gishboliner} \par
\textsc{Department of Mathematics, ETH, Z\"urich, Switzerland} \par
  \textit{Email:}
  \href{mailto:lior.gishboliner@math.ethz.ch}{\texttt{lior.gishboliner@math.ethz.ch}} \par
When working on this project, the author was supported by ERC starting grant 633509. \par
  \addvspace{\medskipamount}

\par

\textsc{Michael Krivelevich} \par
\textsc{School of Mathematical Sciences, Tel Aviv University,
  Tel Aviv 6997801, Israel} \par
  \textit{Email:}
  \href{mailto:krivelev@tauex.tau.ac.il}{\texttt{krivelev@tauex.tau.ac.il}} \par
  Research supported in part by USA-Israel BSF grant 2018267 and by ISF grant 1261/17.
  \par 
  \addvspace{\medskipamount}

\par

\textsc{Peleg Michaeli} \par
\textsc{School of Mathematical Sciences, Tel Aviv University,
  Tel Aviv 6997801, Israel} \par
  \textit{Email:}
  \href{mailto:peleg.michaeli@math.tau.ac.il}{\texttt{peleg.michaeli@math.tau.ac.il}} \par
  Research supported by ERC starting grant 676970 RANDGEOM and by ISF grants
  1207/15 and 1294/19.\par
  \addvspace{\medskipamount}

}}

\usepackage{subcaption}
\usepackage{tikz}
\usetikzlibrary{calc}

\interfootnotelinepenalty=10000

\begin{document}
\maketitle

\begin{abstract}
  We prove that a random graph $G(n,p)$, with $p$ above the Hamiltonicity threshold, is typically such that for any $r$-colouring of its edges there exists a Hamilton cycle with at least $(2/(r+ 1)-o(1))n$ edges of the same colour.
  This estimate is asymptotically optimal.
\end{abstract}

\section{Introduction}
\label{sec:intro}
Hamiltonicity is one of the most flourishing and well-studied areas of research in the theory of random graphs, boasting a wide array of results over hundreds of papers.
In fact, the question of finding the threshold for containing a Hamilton path has already been posed by Erd\H{o}s and R\'enyi in their seminal paper on random graphs~\cite{ER60}.
Building on the breakthrough work of P\'{o}sa~\cite{Pos76}, which introduced a method now known as \emph{P\'{o}sa's rotation--extension technique}, Koml\'{o}s and Szemer\'{e}di~\cite{KS83} and independently Bollob\'{a}s~\cite{Bol84} proved the fundamental result that the threshold for the appearance of a Hamilton cycle in the binomial random graph $G(n,p)$ is $p = (\log n + \log\log n)/n$.
For a historical overview and a list of papers on this topic we refer the reader to an annotated bibliography by Frieze~\cite{FriBib}.

A central theme in this area is that the appearance of Hamilton cycles is closely tied to the disappearance of vertices of degree at most $1$.
In fact, having minimum degree $2$ is often thought of as the ``bottleneck" for the appearance of Hamilton cycles.
This perspective is made remarkably precise in the hitting time results of Ajtai, Koml\'{o}s and Szemer\'{e}di~\cite{AKS85} and of Bollob\'{a}s~\cite{Bol84} (see also the survey~\cite{Kri16} for a shorter proof, and~\cite{AK20} for yet another quantitative aspect of this phenomenon). 

With the threshold for Hamiltonicity known, it is natural to ask about the typical structure of the set of Hamilton cycles appearing in $G(n,p)$, for $p$ which is just above the Hamiltonicity threshold. 
For such values of $p$, one might expect the number of Hamilton cycles in $G(n,p)$ to be small, and their structure sparse and fragile. It turns out, however, that this is quite far from the truth. In fact, the set of Hamilton cycles of $G(n,p)$ (for $p$ as above) typically possesses a rich and robust structure. Several concrete manifestations of this phenomenon have been demonstrated in prior works. For example, it is known that the number of Hamilton cycles in $G(n,p)$ is --- in some well-defined quantitative sense --- concentrated around its mean~\cite{GK13}; that the set of Hamilton cycles in $G(n,p)$ typically possesses local resilience properties~\cites{LS12,Mon19,NST19,Sud17}; and that random edge-colourings of $G(n,p)$ typically admit Hamilton cycles coloured according to any prescribed pattern~\cites{EFK18,AF19}.

In this paper, we establish yet another natural ``robustness property" of the set of Hamilton cycles in $G(n,p)$ (for any $p$ above the Hamiltonicity threshold).
The precise problem we will be studying is as follows.
For a graph $G$ and an integer $r \geq 2$, let $M(G,r)$ be the largest integer $M$ such that in any $r$-colouring of the edges of $G$, there will be a Hamilton cycle with at least $M$ edges of the same colour (if $G$ is not Hamiltonian, we set $M(G,r)=0$). 
The problem of estimating $M(G,r)$ is somewhat similar to (though slightly different from) multicolour discrepancy problems.
In the general setting of combinatorial discrepancy theory, one is given a hypergraph $H$ and tries to $r$-colour its vertices in such a way that every hyperedge is coloured as evenly as possible, in the sense that the numbers of vertices of a given colour in every hyperedge $e$ deviates from its ``mean", $|e|/r$, by as little as possible.
The discrepancy of $H$ is then defined as the maximal deviation one is guaranteed to have in any colouring. In the special setting we consider here, the vertices of the hypergraph $H$ are the edges of $G$, and the hyperedges of $H$ are the Hamilton cycles in $G$. We note, however, that the problem of estimating $M(G,r)$ differs from its discrepancy variant in that $M(G,r)$ is only concerned with ``one-sided deviations", namely with colours appearing significantly more (and not less) than what is expected.  
It is worth noting that discrepancy-type problems in graphs were studied for various ``target subgraphs", such as cliques~\cite{ES72}, spanning trees~\cites{EFLS95,BCJP20}, Hamilton cycles~\cite{BCJP20} and clique factors~\cite{BCPT21}.  

It is natural to expect that if $G$ contains only few Hamilton cycles, then one can $r$-colour the edges of $G$ in such a way that every Hamilton cycle sees approximately the same number, i.e.\ roughly $n/r$, of edges of each colour. 
Our main result, \cref{thm:ham}, shows that the situation in $G(n,p)$ (for $p$ above the Hamiltonicity threshold) is typically very different: one is always guaranteed to find a Hamilton cycle which contains significantly more than $n/r$ edges of the same colour. As alluded to earlier, this is yet another indication of the rich structure of the set of Hamilton cycles in $G(n,p)$. 

Before stating our main result, let us recall some standard terminology. 
For a positive integer $n$ and a real $p\in[0,1]$, denote by $G(n,p)$ the \defn{binomial random graph}, namely, the probability space of all simple labelled graphs on $n$ given vertices, where each pair of vertices is connected by an edge independently with probability $p$.
We say that an event $\cA$ in our probability space occurs \defn{with high probability} (or \whp{}) if $\pr(\cA)\to 1$ as $n$ goes to infinity.

\begin{theorem}\label{thm:ham}
  Let $r\ge 2$ be an integer and let $p\ge (\log{n}+\log{\log{n}}+\omega(1))/n$.
  Then $G\sim G(n,p)$ is \whp{} such that in any $r$-colouring of its edges there exists a Hamilton cycle with at least $(2/(r+1)-o(1))n$ edges of the same colour.
\end{theorem}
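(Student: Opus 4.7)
The plan is to combine the P\'osa rotation-extension technique with a structural colour analysis to produce a Hamilton cycle biased towards a single colour class. The sharp constant $2/(r+1)$ is dictated by the extremal colouring: partition $V$ into $r+1$ equal parts $V_0 \cup V_1 \cup \dots \cup V_r$ and place a distinguished colour $c^*$ on all edges incident to $V_0$; in this colouring, every Hamilton cycle has at most $2|V_0|=2n/(r+1)$ edges of any single colour, with equality attained exactly when $V_0$ is an independent set in the cycle. I will mimic this extremal configuration inside any given colouring of $G(n,p)$.

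First, I would apply a sprinkling decomposition $p = p_1 + p_2 - p_1 p_2$ with $p_1$ still above the Hamiltonicity threshold and $p_2 = o(p_1)$, viewing $G \sim G(n,p)$ as $G_1 \cup G_2$ for independent $G_1 \sim G(n,p_1)$ and $G_2 \sim G(n,p_2)$. The graph $G_1$ plays the role of a Hamiltonian backbone (with strong P\'osa-type expansion \whp), while $G_2$ provides a reservoir of fresh random edges to be used for colour-biased modifications.

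Second, given an $r$-colouring $\chi$ of $E(G)$, I would \emph{embed} the extremal configuration by locating a colour $c^*$ and a set $V_0 \subseteq V$ of size $(1+o(1))n/(r+1)$ such that each $v \in V_0$ has many $c^*$-coloured neighbours in $V \setminus V_0$. Existence of such a pair is the combinatorial heart of the proof; I expect a weighted averaging argument over vertex-colour pairs to yield the sharp constant $2/(r+1)$ rather than the naive $1/r$ obtained from a plain pigeonhole over edges. Given $(c^*, V_0)$, a Hamilton cycle of $G_1[V \setminus V_0]$ (available \whp{} since $|V \setminus V_0| \sim rn/(r+1)$ and $p_1$ is above the Hamiltonicity threshold) can be modified using $G_2$-edges and P\'osa rotations into a Hamilton cycle of $G$ in which each $v \in V_0$ is inserted between two of its $c^*$-neighbours, contributing two $c^*$-edges per insertion and hence $2|V_0| = (2/(r+1)-o(1))n$ monochromatic edges in total.

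The main obstacle is the simultaneous insertion step: at $p = \Theta(\log n / n)$ each $v \in V_0$ has only $\Theta(\log n)$ $c^*$-neighbours in $V \setminus V_0$, so admissible insertion slots (consecutive cycle-pairs inside the $c^*$-neighbourhood of $v$) are rare. The argument must reshape the cycle on $V \setminus V_0$ via fresh $G_2$-edges to be simultaneously compatible with all $|V_0|$ insertions, most likely through a Hall-type matching analysis combined with absorber-style reservoirs from $G_2$. The first step---locating $(c^*, V_0)$ with the sharp constant---is equally delicate and requires the near-regularity of $G(n,p)$ (every vertex having degree $(1\pm o(1))pn$ \whp) to lift a local colour-degree statistic into the global extremal bound.
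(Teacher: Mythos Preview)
Your approach is genuinely different from the paper's, and it contains gaps that are not just missing details but structural obstructions.

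\textbf{What the paper does.} The paper never attempts to insert coloured vertices into a pre-built cycle. Instead it first produces, via the multicolour sparse regularity lemma together with a Cockayne--Lorimer-type matching theorem (\cref{thm:match_general}), a \emph{path} of length $(2/(r+1)-\eps)n$ in which all but $O(1)$ edges share a colour (\cref{thm:paths:pseudo}). The colour work is thus finished before any Hamiltonicity argument begins. Only then does it invoke rotation--extension (\cref{lem:ham:ext}) to complete this path into a Hamilton cycle, with no colour constraint whatsoever on the completion. The sharp constant $2/(r+1)$ comes from the size of a monochromatic matching in the (almost complete) reduced graph, not from a vertex-by-vertex insertion count.

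\textbf{Where your plan breaks.} The insertion step is not merely ``delicate'' --- it fails quantitatively. If $v\in V_0$ has $d=\Theta(\log n)$ colour-$c^*$ neighbours in $V\setminus V_0$, then on any Hamilton cycle of $V\setminus V_0$ one expects only $\Theta(d^2/n)=o(1)$ edges with both endpoints in $N_{c^*}(v)$; summing over all of $V_0$ gives $\Theta((\log n)^2)$ admissible slots in total, against the $n/(r+1)$ insertions you need. Reshaping via $G_2$ cannot rescue this: the adversary colours \emph{after} seeing all of $G=G_1\cup G_2$, so the $G_2$-edges carry no fresh randomness relative to the colouring, and no ``\whp{} over $G_2$'' argument is available once $(c^*,V_0)$ is fixed. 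Any union bound over the $\binom{n}{n/(r+1)}$ choices of $V_0$ will swamp the savings from $p_2$. Separately, your appeal to near-regularity (``every vertex having degree $(1\pm o(1))pn$'') is false at the Hamiltonicity threshold --- the minimum degree is exactly $2$ --- and these degree-$2$ vertices are precisely what forces the paper's careful preparation of $V^\star$ in \cref{lem:ham:ext}. Finally, the existence of $(c^*,V_0)$ with the sharp constant is asserted with only a heuristic (``I expect a weighted averaging argument''); the paper's route shows that the correct version of this step is a monochromatic matching statement, not a vertex-colour-degree statement.
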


Using similar tools to those used in the proof of \cref{thm:ham}, we sketch a proof for the following analogous result for perfect matchings.

\begin{theorem}\label{thm:pm}
  Let $r\ge 2$ be an integer and let $p\ge (\log{n}+\omega(1))/n$.
  Then, assuming $n$ is even, $G\sim G(n,p)$ is \whp{} such that in any $r$-colouring of its edges there exists a perfect matching with at least $(1/(r+1)-o(1))n$ edges of the same colour.
\end{theorem}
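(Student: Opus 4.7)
The approach is analogous to \cref{thm:ham}, with perfect matchings replacing Hamilton cycles. The skeleton consists of three steps: a sprinkling decomposition, the location of a monochromatic matching of the target size using a rotation/augmentation argument, and an automatic completion via the swap itself.

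Concretely, I would write $p = p_1 + p_2 - p_1 p_2$ with $p_1 = (\log n + \omega(1)/2)/n$ just above the perfect-matching threshold and $p_2 = \omega(1)/(2n)$, and sample $G_1 \sim G(n, p_1)$ and $G_2 \sim G(n, p_2)$ independently, so that $G = G_1 \cup G_2 \sim G(n, p)$. Given any $r$-edge-colouring $\chi$ of $E(G)$, I would select a perfect matching $M$ of $G_1$ maximising $k := \max_c |M \cap E(G_{1,c})|$, attained at $c^\star$, and suppose towards contradiction that $k < (1/(r+1) - \varepsilon)n$. Set $U := V \setminus V(M \cap E(G_{1,c^\star}))$, which has size at least $(r-1)n/(r+1) + \Omega(n)$, and let $\sigma$ denote the $M$-partner involution, which restricts to a fixed-point-free involution of $U$. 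The $G_1$-maximality of $M$ forces that for every $\{u,v\} \in E(G_{1,c^\star}[U])$ one has $\{\sigma(u), \sigma(v)\} \notin E(G_1)$, since otherwise the perfect matching
\[
M \triangle \{\{u, \sigma(u)\}, \{v, \sigma(v)\}, \{u,v\}, \{\sigma(u), \sigma(v)\}\}
\]
lies in $G_1$ and has strictly more colour-$c^\star$ edges. The fresh layer $G_2$ is then used to complete a swap producing a PM of $G$ (not of $G_1$): since $G_2$ is independent of $(G_1, M, \sigma, \chi|_{G_1})$, a first-moment calculation relying on $|E(G_{1,c^\star}[U])| \cdot p_2 \to \infty$ guarantees whp some pair $\{\sigma(u), \sigma(v)\}$ lying in $E(G_2)$, giving the improving swap. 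Iterating (while adjusting the sprinkle and the choice of $c^\star$ as $k$ grows) yields a perfect matching of $G$ with $(1/(r+1) - o(1))n$ edges of colour $c^\star$, and no separate extension step is needed.

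The main obstacle is running this rotation-style argument at the very perfect-matching threshold. Unlike \cref{thm:ham}, where $p$ sits above the Hamiltonicity threshold and minimum degree is at least $2$ with room to spare, here $G$ may have degree-$1$ vertices and the involution $\sigma$ is tightly coupled to the structure of $G_1$. The delicate point is to choose the colour $c^\star$ (e.g.\ the majority colour filtered through a vertex-degree pigeonhole rather than merely the majority in $M$) so that $|E(G_{1,c^\star}[U])|$ is large enough for the first-moment estimate to bite, and to couple the independence between $G_1$, $G_2$, and the adversarial colouring $\chi$ carefully enough to make the swap argument robust to all configurations of $(M, \sigma, U, c^\star)$. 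This is the matching analogue of the Pósa booster lemma of \cref{thm:ham} and constitutes the chief technical hurdle.
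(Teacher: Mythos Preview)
Your plan departs entirely from the paper's route, and the central step does not close. The paper proves \cref{thm:pm} by reusing the machinery already built for \cref{thm:ham}: a variant of \cref{lem:ham:ext} (adapted to the perfect-matching threshold, with the degree-$1$ vertices and their neighbours placed in $V'$) provides a partition $V=V^\star\cup V'$; one first matches the degree-$1$ vertices by $M_0$, then applies \cref{thm:paths:pseudo} inside the pseudorandom remainder $G[V'\setminus V(M_0)]$ to obtain an almost-monochromatic path of length $(2/(r+1)-\eps)n$, from which a monochromatic matching of size $(1/(r+1)-\eps')n$ is read off \emph{in one shot}; finally the modified lemma supplies a Hamilton path on the complement, whose alternate edges complete the perfect matching. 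There is no sprinkling and no incremental augmentation; the large monochromatic piece comes directly from the sparse regularity lemma combined with \cref{thm:match}.

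Your swap scheme, by contrast, gains at most two colour-$c^\star$ edges per application, so reaching the target from an arbitrary starting matching requires $\Theta(n)$ iterations. After a single swap the new matching $M'$ lies in $G$ rather than $G_1$, so $G_1$-maximality no longer applies to it, and the independence of $G_2$ from $(M,\sigma,U,c^\star)$ has already been spent. Splitting $p_2=\omega(1)/n$ into $\Theta(n)$ fresh sprinkles gives each round density $o(1/n)$, while $|E(G_{1,c^\star}[U])|=O(n\log n)$, so the per-round success probability is $o(1)$ and the iteration cannot be sustained. The second obstacle you flag is likewise unresolved: nothing forces the adversary to put many $c^\star$-edges inside $U$ (in the extremal colouring there are essentially none once $k$ is near $n/(r+1)$), and choosing $c^\star$ to be popular in $G_1[U]$ is circular since $U$ itself depends on $c^\star$. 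The paper's use of regularity plus \cref{thm:match} is precisely what sidesteps both issues, producing the large monochromatic matching globally rather than edge by edge.
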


The fraction $1/(r+1)$ in \cref{thm:pm}, and hence also the fraction $2/(r+1)$ in \cref{thm:ham}, is tight.
In fact, in {\em every} $n$-vertex graph $G$ there exists an $r$-colouring in which in every matching, the maximum number of edges of the same colour is at most $n/(r+1)$.
Such a colouring, which to the best of our knowledge first appeared in~\cite{CL75}, can be described as follows.
Partition $V(G)$ into sets $V_1,\ldots,V_r$ such that $|V_i|=n/(r+1)$ for $i=1,\ldots,r-1$ and $|V_r|=2n/(r+1)$.
For $i=1,\ldots,r$ (in increasing order), colour by $i$ every edge touching $V_i$ that has not already been coloured. Namely, for each $1 \leq i \leq r$, all edges contained in $V_i \cup \dots \cup V_r$ and touching $V_i$ are coloured with colour $i$ (see \cref{fig:ham:tight}).
It is easy to see that any monochromatic matching in this colouring is of size at most $n/(r+1)$.
Moreover, observe that any Hamilton cycle contains at most $2n/(r+1)$ edges of a given colour, as otherwise it would also contain a matching of size larger than $n/(r+1)$, hence also $M(G,r) \le 2n/(r+1)$.

\begin{figure*}[t]
  \captionsetup{width=0.879\textwidth,font=small}
  \centering
  \includegraphics[height=1.2in]{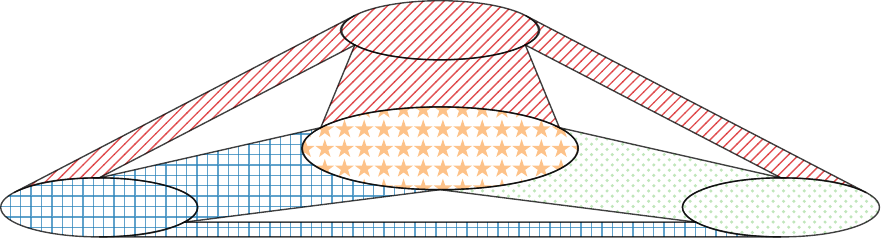}
  \caption{A $4$-coloured complete graph on $n$ vertices.  Each of the small bulbs represents a set of $n/5$ vertices, and the large bulb in the middle represents a set of $2n/5$ vertices.  Any monochromatic matching in this graph is of size at most $n/5$, hence also every Hamilton cycle contains at most $2n/5$ edges of the same colour.}
  \label{fig:ham:tight}
\end{figure*}

The above construction and its analysis suggest a connection between the problem of estimating $M(G,r)$ and the problem of finding monochromatic matchings in $r$-colourings of (the edges of) $G$.
Indeed, our proof of \cref{thm:ham} relies on a new Ramsey-type result for matchings, which may be of independent interest. 

A classical theorem of Cockayne and Lorimer~\cite{CL75} states that for integers $k_1,\dots,k_r \geq 1$ and $n \geq \sum_{i=1}^{r}{(k_i - 1)} + \max\{k_1,\dots,k_r\} + 1$,
every $r$-colouring of the edges of the complete graph $K_n$ contains a monochromatic matching of size $k_i$ in colour $i$ for some $1 \leq i \leq r$.  
The following theorem extends this result to {\em almost complete} host graphs. 

\begin{theorem}\label{thm:match_general}
    Let $r \geq 2$, let $k_1,\dots,k_r \geq 1$, let $0 \leq \delta \leq \frac{1}{2(r+1)}$, let $G$ be a graph with $n$ vertices and at least $(1-\delta)\binom{n}{2}$ edges, and suppose that 
    $\big( 1 - (r+1)\delta \big) n \geq \sum_{i=1}^{r}{(k_i - 1)} + k + 1$, where 
    $k := \max\{k_1,\dots,k_r\}$.
    Then, for every $r$-colouring of the edges of $G$, there is $1 \leq i \leq r$ such that $G$ contains a matching of size $k_i$, all of whose edges are coloured with colour $i$.
\end{theorem}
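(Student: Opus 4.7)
I would proceed by induction on $r$, adapting the inductive proof of the Cockayne--Lorimer theorem to the almost-complete setting.

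For the base case $r=1$, the condition reads $(1-2\delta)n \geq 2k_1$ and we must find a matching of size $k_1$ in $G$. If none exists, the Erd\H{o}s--Gallai theorem bounds $e(G) \leq \max\bigl\{\binom{2k_1-1}{2},\, (k_1-1)(n-k_1+1) + \binom{k_1-1}{2}\bigr\}$, which, paired with $e(G) \geq (1-\delta)\binom{n}{2}$, contradicts the lower bound on $n$ after a short calculation.

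For the inductive step, assume the result for $r-1$ colours. Without loss of generality $k_r = k$. Let $M_r$ be a maximum matching in colour $r$. If $|M_r| \geq k_r$, we are done. Otherwise, $|M_r| \leq k-1$, and $W := V(M_r)$ has $|W| \leq 2(k-1)$; by maximality of $M_r$, the induced subgraph $G' := G[V \setminus W]$ carries no edges of colour $r$, and so is $(r-1)$-edge-coloured. I would apply the inductive hypothesis to $G'$ with parameters $(k_1, \ldots, k_{r-1})$.

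The crux of the argument is verifying that $G'$ satisfies the inductive hypothesis: writing $\delta'$ for the non-edge density of $G'$ and $k' = \max_{i<r}k_i \leq k$, one needs $(1 - r\delta')|V \setminus W| \geq \sum_{i<r}(k_i-1) + k' + 1$. The absolute number of non-edges in $G'$ is bounded by $\delta\binom{n}{2}$, but $\binom{|V\setminus W|}{2}$ has shrunk, so $\delta'$ can exceed $\delta$. The condition $(1-(r+1)\delta)n \geq \sum(k_i-1) + k + 1$ is designed to absorb both the loss of $|W| \leq 2(k-1)$ vertices and the inflation of the non-edge density when passing from $r$ to $r-1$ colours: the drop $(r+1)\delta \to r\delta'$ in the coefficient should compensate for the density inflation, while the gap $(k_r - 1) + (k - k')$ between the old and new right-hand sides should cover the vertex loss $|W|$. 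The bookkeeping is most delicate when the maximum $k_i$ is tied among several colours, or when the colour-$r$ subgraph is highly non-bipartite, forcing $|W|$ close to $2(k-1)$; in these cases I would either select the colour $r$ more carefully among the tied ones, or replace $V(M_r)$ by a minimum vertex cover of the colour-$r$ subgraph (which, by K\"onig's theorem, can be trimmed to size $k-1$ whenever that subgraph has the K\"onig property).
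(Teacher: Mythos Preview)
Your inductive scheme has a genuine gap, and it appears already at the level of the vertex count, before the density issue even enters. Take $\delta = 0$ (the Cockayne--Lorimer case itself) and suppose $k_1 = \cdots = k_r = k \geq 2$. The hypothesis reads $n \geq (r+1)(k-1) + 2$, and after deleting $W = V(M_r)$ with $|W| = 2(k-1)$ you are left with $n' \geq (r-1)(k-1) + 2$ vertices. But the inductive hypothesis for $r-1$ colours with $k_1 = \cdots = k_{r-1} = k$ demands $n' \geq r(k-1) + 2$, which is larger by $k-1$. In your own bookkeeping, the ``gap'' $(k_r - 1) + (k - k')$ between the old and new right-hand sides equals $(k-1) + 0 = k-1$ here, while $|W|$ can be $2(k-1)$; the gap does not cover the loss. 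The fixes you propose do not rescue this: K\"onig's theorem is unavailable because the colour-$r$ subgraph need not be bipartite (a colour class consisting of $k-1$ vertex-disjoint triangles has matching number $k-1$ but minimum vertex cover $2(k-1)$), and choosing the deleted colour differently runs into the same arithmetic whenever all the $k_i$ are equal. The density inflation you flag is a second, independent obstruction layered on top of this one.

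The paper avoids induction altogether. It argues by contradiction, applying the Tutte--Berge formula to each colour class $G_i$ to produce a set $U_i$ with $\odd(G_i - U_i) \geq n - 2(k_i - 1) + |U_i|$; an elementary lemma then yields $|E(G_i - U_i)| \leq \binom{2(k_i-1) - 2|U_i| + 1}{2}$. Summing over $i$ and accounting for edges meeting $U_1 \cup \cdots \cup U_r$ gives an explicit upper bound on $|E(G)|$ as a function of $|U_1|,\dots,|U_r|$, and a concavity/optimisation argument shows this bound is strictly below $(1-\delta)\binom{n}{2}$. The point is that all colours are handled simultaneously through a global edge count, so nothing has to transfer to a smaller instance and neither the vertex-loss nor the density-inflation problem arises.
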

For $k_1 = \dots = k_r = k$, the condition in Theorem \ref{thm:match_general} becomes $\big( 1 - (r+1)\delta \big) n \geq \linebreak (r+1)(k-1) + 2$, which is satisfied
if $k \leq \big( \frac{1}{r+1} - \delta \big)n$.
Hence, for this case we have the following corollary.
\begin{corollary}
  \label{thm:match}
	Let $r \ge 2$, let $0 \leq \delta \leq \frac{1}{2(r+1)}$, and let $G$ be a graph on $n$ vertices and at least $(1-\delta)\binom{n}{2}$ edges.
  Then, in every $r$-colouring of the edges of $G$ there is a monochromatic matching of size at least
    $\big\lfloor \big( \frac{1}{r+1} - \delta \big)n \big\rfloor$.
\end{corollary}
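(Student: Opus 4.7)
The plan is to derive \cref{thm:match} directly from \cref{thm:match_general} by specialising to equal targets. I would set $k_1 = \dots = k_r = k$, where $k := \big\lfloor \big(\tfrac{1}{r+1} - \delta\big) n \big\rfloor$, and check that the hypothesis of \cref{thm:match_general} is met. Since $\delta \le \tfrac{1}{2(r+1)}$ is part of the assumption, the only remaining work is to verify the quantitative inequality on $n$.

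For this specialisation one has
\[
  \sum_{i=1}^{r}(k_i - 1) + \max\{k_1,\dots,k_r\} + 1 = (r+1)(k-1) + 2.
\]
The required condition is therefore $\big(1 - (r+1)\delta\big) n \ge (r+1)(k-1) + 2$. From $k \le \big(\tfrac{1}{r+1} - \delta\big) n$ (which is exactly what the floor guarantees) one obtains
\[
  (r+1)(k-1) + 2 = (r+1)k - (r-1) \le \big(1 - (r+1)\delta\big) n - (r-1) \le \big(1 - (r+1)\delta\big) n,
\]
using $r \ge 2$ in the last step. Hence the hypothesis of \cref{thm:match_general} holds.

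Applying \cref{thm:match_general} to an arbitrary $r$-colouring of $E(G)$ then produces an index $1 \le i \le r$ and a monochromatic matching in colour $i$ of size $k_i = k = \big\lfloor \big(\tfrac{1}{r+1} - \delta\big) n \big\rfloor$, which is exactly what \cref{thm:match} asserts. I do not expect any real obstacle here: all of the combinatorial difficulty is packaged inside \cref{thm:match_general}, and the deduction of the corollary is a short arithmetic verification of the threshold, essentially the one already sketched in the paragraph preceding the statement.
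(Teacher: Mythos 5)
Your proposal is correct and follows exactly the paper's own route: the authors derive the corollary from \cref{thm:match_general} by setting $k_1=\dots=k_r=k$ and noting the condition becomes $\big(1-(r+1)\delta\big)n \ge (r+1)(k-1)+2$, which holds whenever $k \le \big(\tfrac{1}{r+1}-\delta\big)n$. Your arithmetic verification of this threshold matches the paper's (brief) argument, so there is nothing further to add.
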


Our proof of \cref{thm:match_general} is inspired by a new proof of the Cockayne--Lorimer theorem, given in~\cite{XYZ19+}. 

As a second step towards proving \cref{thm:ham}, we will combine \cref{thm:match} with a multicolour version of the sparse regularity lemma (stated here as \cref{thm:sparse_reg}) to prove that in any $r$-colouring of the edges of an $n$-vertex {\em pseudorandom} graph $G$,
there must be a path of length $(2/(r+1)-o(1))n$ in which all but a fixed number of edges are of the same colour.
We will postpone the precise definition of pseudorandomness to  \cref{sec:paths}, and for now only note that as a bi-product, we get the following aesthetically pleasing result.  

\begin{theorem}\label{thm:paths}
  Let $r\ge 2$ be an integer and let $\eps>0$.
  Then there exist $C=C(r,\eps)$ and $K=K(r,\eps)$ such that if $p\ge C/n$, the random graph $G\sim G(n,p)$ is \whp{} such that in any $r$-colouring of its edges there exists a path of length at least $(2/(r+1)-\eps)n$ in which all but at most $K$ of the edges are of the same colour.
\end{theorem}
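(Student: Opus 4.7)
The plan is to apply a multicolour sparse regularity lemma to $G(n,p)$, use \cref{thm:match} in the resulting reduced graph to locate a large monochromatic matching, and then ``blow up'' this matching into a long path in $G$ whose edges are almost all of a single colour, using only a bounded number of extra edges to splice the pieces together. The final $K$ will be essentially the number of parts produced by the regularity lemma, which is a constant depending on $r$ and $\eps$ only.

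Let $\eps' = \eps'(r,\eps) > 0$ be a small parameter chosen later. Applying the $r$-colour sparse regularity lemma (\cref{thm:sparse_reg}) to $G \sim G(n,p)$ produces, whp, an equipartition $V_1 \cup \cdots \cup V_t$ with $t \le T_0(\eps',r)$ parts, in which all but an $\eps'$-fraction of the pairs $(V_i,V_j)$ are $(\eps',p)$-regular in each of the $r$ colours. Form the reduced graph $R$ on $[t]$ whose edges are the pairs that are regular in every colour; then $R$ has at least $(1-\eps')\binom{t}{2}$ edges. Since the colour-densities on any regular pair sum to $(1\pm o(1))p$, each edge $ij$ of $R$ has some colour $c(ij)$ whose density on $(V_i,V_j)$ is at least $p/r - o(p)$. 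Applying \cref{thm:match} to the $r$-coloured $R$ with $\delta = \eps'$ yields a monochromatic matching $M \subseteq E(R)$ of size $m \ge \lfloor(1/(r+1) - \eps')t\rfloor$ in some colour, which we may assume to be colour $1$.

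For each $ij \in M$, the colour-$1$ edges of $G$ between $V_i$ and $V_j$ form an $(\eps',p)$-regular bipartite graph of density at least $p/r - o(p)$. Whp pseudorandom properties of $G(n,p)$ for $p \ge C/n$ with $C=C(r,\eps)$ large (as formalised in \cref{sec:paths}) let us find inside each such pair a colour-$1$ path $P_{ij}$ of length at least $(2 - \eps')(n/t)$ whose two endpoints can be prescribed from suitable large subsets of $V_i$ and $V_j$, e.g.\ via a sparse P\'osa rotation--extension argument carried out inside each regular pair. Ordering $M$ arbitrarily as $M_1,\ldots,M_m$, we then use the whp property that $G$ has an edge between any two disjoint vertex subsets of size $\eps' n/t$ to join an endpoint of $P_{M_k}$ to an endpoint of $P_{M_{k+1}}$ by a single edge of $G$ (of arbitrary colour) for every $1 \le k < m$. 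Concatenating produces a path in $G$ of total length at least $m(2-\eps')(n/t) \ge (2/(r+1) - O(\eps'))n$, with at most $m-1 \le t \le T_0(\eps',r)$ edges not of colour $1$. Choosing $\eps'$ small relative to $\eps$ and setting $K := T_0(\eps',r)$ yields the theorem.

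The main obstacle is the near-spanning colour-$1$ path with flexibly chosen endpoints inside each sparse regular pair of the matching. In the dense regime this is a routine consequence of the Hamilton-connectivity of super-regular pairs, but at sparsity $p = C/n$ one must instead invoke the pseudorandomness of $G(n,p)$ to run a rotation--extension or absorption argument uniformly across all $m$ pairs while tracking the endpoint flexibility needed for the final concatenation; this is precisely what forces the dependence of $C$ on $r$ and $\eps$ and motivates the precise notion of pseudorandom host graph used in \cref{sec:paths}.
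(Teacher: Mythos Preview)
Your high-level strategy---sparse multicolour regularity, monochromatic matching in the reduced graph via \cref{thm:match}, long monochromatic path inside each matched pair, then splice---is exactly the paper's. The divergence is in the two steps you single out as the ``main obstacle'', and there the paper is both simpler and cleaner than what you propose.

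First, to obtain a long colour-$1$ path inside a matched pair $(V_i,V_j)$ the paper does not use rotation--extension or absorption at all. Regularity with density at least $p/(2r)$ already guarantees an edge of colour~$1$ between any two subsets of size $\delta|V_i|$, and then the elementary DFS lemma (\cref{lem:paths_DFS}) immediately yields a colour-$1$ path of length at least $(2-4\delta)n/t$, with no control over the endpoints and none needed. Your suggestion of running P\'osa rotations inside a sparse \emph{adversarially coloured} regular pair is not obviously sound: regularity gives no small-set expansion for the colour-$1$ graph, which is what rotation--extension requires. Second, because no endpoint flexibility is available, the paper splices differently from what you sketch: rather than prescribing endpoints in advance, it takes the first and last $\gamma n$ vertices of each path $P_k$, uses $(\gamma,p)$-pseudorandomness of $G$ to find one (arbitrarily coloured) edge between the tail segment of $P_k$ and the head segment of $P_{k+1}$, and then \emph{truncates} both paths at that edge. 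The total loss is at most $2T\gamma n$, which is absorbed into $\eps$. So the ``obstacle'' you identify is illusory once one replaces rotation--extension by the DFS lemma and endpoint prescription by the truncate-and-connect trick; this is also why the dependence of $C$ on $(r,\eps)$ enters only through pseudorandomness (\cref{lem:gnp_is_pseudo}) and not through any Hamiltonicity-type argument inside the pairs.
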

It is interesting to note that \cref{thm:ham,thm:paths} are nontrivial (and new) even for the extreme case $p=1$, i.e., where the coloured graph is the complete graph.
An immediate corollary of \cref{thm:paths} is that for large enough $C = C(r,\eps)$, \whp{} there is a monochromatic matching of size $(1/(r+1)-\eps)n$ in any $r$-colouring of the edges of $G(n,C/n)$.
In this sense, \cref{thm:paths} is again optimal, as explained before.

A closely related and in fact relatively well studied problem is that of finding long {\em monochromatic} paths in edge-colourings of graphs (which corresponds to requiring $K=0$ in \cref{thm:paths}).
For two colours, this problem was resolved by Gerencs\'{e}r and Gy\'{a}rf\'{a}s in~\cite{GG67} for complete graphs and by Letzter in~\cite{Let16} for random graphs.
For $r \geq 3$ colours, it is conjectured that every edge-colouring of $K_n$ contains a monochromatic path of length $(1/(r-1)-o(1))n$, and that the same holds \whp{} for $G(n,p)$ with $np \to \infty$ (see, e.g.,~\cite{DP17}).
It is known that if true, this would be best possible (even in the complete graph; see, e.g.,~\cite{KS19} and the references therein).
This conjecture was resolved for $r= 3$ by Gy\'{a}rf\'{a}s, Ruszink\'{o}, S\'{a}rk\"{o}zy and Szemer\'{e}di in~\cites{GRSS07,GRSS07*} (for the complete graph) and by Dudek and Pra{\l}at in~\cite{DP17} (for random graphs), and it remains open for all $r \geq 4$.
Accidentally, for $r=2,3$ the two problems --- that of finding a large monochromatic path and that of finding a large path in which all but a constant number of the edges are of the same colour --- have the same answer (both in random and in complete graphs; this follows from \cref{thm:paths} and the aforementioned results of \cites{GG67,Let16,GRSS07,GRSS07*,DP17}).
For $r\ge 4$, however, these two problems diverge;
allowing a fixed number of edges to be coloured differently significantly increases the length of a path one can find, from at most $(1/(r-1)+o(1))n$ for monochromatic paths to $(2/(r+1) - o(1))n$ for {\em almost monochromatic} ones.

A common technique for finding long monochromatic paths, pioneered by Figaj and {\L}uczak in~\cite{FL07} (following an idea by {\L}uczak~\cite{Luc99}), consists of applying the (sparse) regularity lemma and finding large monochromatic {\em connected matchings} in the reduced graph of a regular partition.
In contrast, in order to find an almost monochromatic path, it is sufficient to find a monochromatic (not necessarily connected) matching in the reduced graph.
One can expect --- and we show that this is indeed the case --- that in almost complete graphs (such as the reduced graphs we consider here), one can find substantially larger monochromatic matchings when dropping the requirement that they be connected.
As mentioned above, this is a key step in the proof of \cref{thm:paths}.


Let us now say a few words about the remaining ingredients which go into the proof of \cref{thm:ham}.
With \cref{thm:paths} at hand, the proof of \cref{thm:ham} proceeds as follows.
\Cref{thm:paths} gives us a path $P$ of length $(2/(r+1)-o(1))n$ in which all but a fixed number of edges are of the same colour.
Our goal is therefore to extend this path into a Hamilton cycle, or, equivalently, to find a Hamilton path in the remaining set of vertices between neighbours of the endpoints of $P$.
We achieve this by carefully splitting the remaining vertices into two equal sets, each containing many neighbours of the corresponding endpoint of $P$, so that the minimum degree of the graph spanned by each of these sets is at least $2$.
In fact, to do so we need to ``prepare'' our graph, putting aside small degree vertices with their neighbours, and finding $P$ outside this set.
We thus want to find a suitable path not in our random graph but rather in some large induced subgraph thereof; hence we need a generalisation of \cref{thm:paths} to pseudorandom graphs, \cref{thm:paths:pseudo}.
We continue by showing that in each of the two above-mentioned sets there are many Hamilton paths which start at a given point (a neighbour of the corresponding endpoint of $P$), or, more precisely, Hamilton paths with many distinct ends.
The argument relies on the so-called rotation-extension technique, invented by P\'osa in~\cite{Pos76} and has since been applied in numerous papers about Hamiltonicity of random graphs.
We conclude our proof by using expansion properties of our graph to connect the ends of two such Hamilton paths, by that extending $P$ to a Hamilton cycle.

\paragraph{Organisation}
\label{organisation}
We begin by proving \cref{thm:match_general} in \cref{sec:match}.
In \cref{sec:paths} we prove \cref{thm:paths:pseudo}, a generalisation of \cref{thm:paths} to pseudorandom graphs.
At the end of the section we show how to connect the monochromatic linear forest we obtain to a long path, almost all of whose edges are of the same colour.
The goal of \cref{sec:ham:ext} is to introduce fairly general machinery to prepare random graphs in such a way that a path found in some (large) part of the graph can always be extended to a Hamilton cycle.

\paragraph{Notation and terminology}
Let $G=(V,E)$ be a graph.
For two vertex sets $U,W\subseteq V$ we denote by $E_G(U)$ the set of edges of $G$ spanned by $U$ and by $E_G(U,W)$ the set of edges having one endpoint in $U$ and the other in $W$.
The degree of a vertex $v\in V$ is denoted by $d_G(v)$, and we write $d_G(v,U)=|E_G(\{v\},U)|$.
We let $\delta(G)$ and $\Delta(G)$ denote the minimum and maximum degrees of $G$.
When the graph $G$ is clear from the context, we may omit the subscript $G$ in the notations above.

If $f,g$ are functions of $n$ we use the notation $f\sim g$ to denote asymptotic equality, namely, $f\sim g$ if $f=(1+o(1))g$, and we write $f\ll g$ if $f=o(g)$.
For the sake of simplicity and clarity of presentation, we often make no particular effort to optimise the constants obtained in our proofs, and omit floor and ceiling signs when they are not crucial.

\section{Large monochromatic matchings in almost complete graphs}
\label{sec:match}

The goal of this section is to prove \cref{thm:match_general}. 
The primary tool used in the proof is the well-known Tutte--Berge formula (see, e.g.,~\cite{LP}), which we state as follows. For a graph $G$, let $\nu(G)$ denote the maximum size of a matching in $G$, and $\odd(G)$ denote the number of connected components of $G$ whose size is odd.
\begin{theorem}[Tutte--Berge formula]\label{thm:Tutte-Berge}
	Every graph $G$ satisfies 
	\[
  \nu(G) = \frac{1}{2} \cdot |V(G)| - 
	\frac{1}{2} \cdot \max_{U \subseteq V(G)}(\odd(G - U) - |U|).
	\]
\end{theorem}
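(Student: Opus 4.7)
The plan is to prove the two matching inequalities corresponding to $\nu(G) \le \tfrac12|V(G)| - \tfrac12(\odd(G-U) - |U|)$ for every $U$, and the matching lower bound witnessed by a particular choice of $U$. The first direction is a direct counting argument; the second is the deep one and I would deduce it from Tutte's $1$-factor theorem via a clever enlargement of $G$.

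For the upper bound, fix an arbitrary $U \subseteq V(G)$ and an arbitrary matching $M$ in $G$, and look at the components of $G - U$. In each odd component $C$ of $G - U$, the matching $M$ cannot perfectly cover $C$ using only internal edges (since $|C|$ is odd), so either $C$ contains an $M$-unmatched vertex, or else some vertex of $C$ is matched by $M$ to a vertex of $U$. Each vertex of $U$ absorbs at most one such component, so the number of $M$-unmatched vertices is at least $\odd(G-U) - |U|$. Taking $M$ to be a maximum matching gives $|V(G)| - 2\nu(G) \ge \odd(G-U) - |U|$, and maximising over $U$ yields the $\le$ direction.

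For the lower bound, set $d := \max_U(\odd(G-U) - |U|)$, and note that a standard parity check (the number of odd components of $G-U$ has the same parity as $|V(G)| - |U|$) forces $d$ and $|V(G)|$ to have the same parity, and that $d \ge 0$ by taking $U = \es$. Construct an auxiliary graph $G'$ by adding $d$ new vertices and joining each of them to every other vertex of $G'$ (including the other new vertices). Then $|V(G')|$ is even, and I claim $G'$ satisfies Tutte's condition $\odd(G' - U') \le |U'|$ for every $U' \subseteq V(G')$. Indeed, if $U'$ omits at least one new vertex, then $G' - U'$ is connected (the surviving new vertex dominates everything), so $\odd(G' - U') \le 1 \le |U'|$ unless $U' = \es$, in which case $G'$ is connected with an even number of vertices; if $U'$ contains all $d$ new vertices, write $U' = U \cup W$ with $U \subseteq V(G)$ and $|W| = d$, observe $G' - U' = G - U$, and conclude $\odd(G' - U') = \odd(G - U) \le |U| + d = |U'|$ by the definition of $d$. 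Tutte's theorem now yields a perfect matching of $G'$, which, after deleting the at most $d$ edges incident to the new vertices, restricts to a matching of $G$ covering at least $|V(G)| - d$ vertices, i.e., $\nu(G) \ge \tfrac12(|V(G)| - d)$.

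The main obstacle in this plan is the auxiliary-graph argument: one must choose exactly the right number $d$ of added vertices and verify Tutte's condition in each case, which hinges on the parity observation above. Everything else, including the upper bound and the final substitution of the perfect matching of $G'$ back into $G$, is bookkeeping once Tutte's theorem is accepted as a black box.
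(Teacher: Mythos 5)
Your proof is correct: the counting argument for the upper bound is sound, the parity observation (the number of odd components of $G-U$ is congruent to $|V(G)|-|U|$ modulo $2$, hence $d\equiv|V(G)| \pmod 2$) is exactly what makes the auxiliary graph $G'$ have even order, the verification of Tutte's condition in both cases (including the degenerate case $d=0$, where only your second case occurs) goes through, and discarding the at most $d$ matching edges meeting the new vertices indeed leaves a matching of $G$ covering at least $|V(G)|-d$ vertices. Note, however, that the paper does not prove this statement at all: the Tutte--Berge formula is invoked as a classical black box with a reference (Lov\'asz--Plummer), so there is no in-paper argument to compare against; your derivation is the standard textbook reduction to Tutte's $1$-factor theorem, and it is a perfectly legitimate way to justify the cited result, provided one is content to take Tutte's theorem itself as given.
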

\noindent
We will also need the following simple lemma. 
\begin{lemma}\label{lem:max_edges}
	Let $G$ be a graph with $n$ vertices and $t$ connected components. Then $|E(G)| \leq \binom{n-t+1}{2}$. 
\end{lemma}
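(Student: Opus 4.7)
The plan is to reduce the problem to the purely extremal question of maximizing $\sum_{i=1}^t \binom{n_i}{2}$ subject to the constraints $n_i \geq 1$ and $\sum_{i=1}^t n_i = n$, where the $n_i$ denote the sizes of the connected components of $G$. Since each component contributes at most $\binom{n_i}{2}$ edges, we have $|E(G)| \leq \sum_{i=1}^t \binom{n_i}{2}$, and it suffices to prove the bound $\sum_{i=1}^t \binom{n_i}{2} \leq \binom{n-t+1}{2}$.

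The core of the argument is a smoothing/merging inequality: for all integers $a, b \geq 1$,
\[
  \binom{a}{2} + \binom{b}{2} \leq \binom{a+b-1}{2},
\]
which follows from the identity $\binom{a+b-1}{2} - \binom{a}{2} - \binom{b}{2} = (a-1)(b-1) \geq 0$. Applying this repeatedly, one can transform the partition $(n_1,\ldots,n_t)$ of $n$ into the partition $(n-t+1, 1, 1, \ldots, 1)$ while only increasing the sum $\sum_i \binom{n_i}{2}$; thus the maximum is attained at this extremal configuration and equals $\binom{n-t+1}{2}$.

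Alternatively, this could be phrased as a short induction on $t$ (with $t=1$ being trivial), merging one singleton-candidate into the largest component at each step, or as a direct observation that taking $G$ to be the disjoint union of $K_{n-t+1}$ and $t-1$ isolated vertices achieves the bound with equality, while the smoothing inequality shows it is optimal. The proof involves no obstacle to speak of; it is a clean convexity-type argument, and will be invoked in the proof of \cref{thm:match_general} in combination with the Tutte--Berge formula to control $\odd(G-U)$ via the number of edges of $G$.
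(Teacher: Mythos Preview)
Your proof is correct and follows essentially the same approach as the paper: both reduce to maximizing $\sum_i \binom{n_i}{2}$ over partitions of $n$ into $t$ positive parts and then argue by smoothing that the optimum is $(n-t+1,1,\dots,1)$. The only cosmetic difference is that the paper's smoothing step replaces a pair $(x_i,x_j)$ with $x_j \ge x_i \ge 2$ by $(x_i-1,x_j+1)$, whereas you replace $(a,b)$ by $(a+b-1,1)$ via the identity $\binom{a+b-1}{2}-\binom{a}{2}-\binom{b}{2}=(a-1)(b-1)$; both reach the same extremal configuration.
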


\begin{proof}
	Let $G$ be as in the lemma, and let $C_1,\dots,C_t$ be the connected components of $G$.
  Evidently, $|E(G)| \leq \sum_{i=1}^{t}{\binom{|C_i|}{2}}$.
  Thus, in order to prove the lemma, it suffices to show that the function $g(x_1,\dots,x_t) = \sum_{i=1}^{t}{\binom{x_i}{2}}$ with domain $\{(x_1,\dots,x_t) \in [n]^t \; : \; x_1+\dots+x_n = t\}$ attains its maximum when $x_1 = n-t+1,x_2 = \dots = x_t = 1$, where it equals $\binom{n-t+1}{2}$. So let $(x_1,\dots,x_t) \in [n]^t$ be a maximum point of $g$. It is enough to show that there is (at most) one $1 \leq i \leq t$ such that $x_i \geq 2$. So suppose by contradiction that $x_i,x_j \geq 2$ for some distinct $i,j \in [t]$. Without loss of generality, assume that $x_j \geq x_i$. Now, setting $y_i := x_i - 1$, $y_j := x_j + 1$, and $y_k := x_k$ for $k \in [t] \sm \{i,j\}$, observe that $g(y_1,\dots,y_k) = g(x_1,\dots,x_k) + x_j - (x_i - 1) \geq  g(x_1,\dots,x_k) + 1$, in contradiction to the choice of $x_1,\dots,x_k$. 
\end{proof}

\subsubsection*{Proof of \cref{thm:match_general}}
Let $G$ be a graph with $n$ vertices and at least $(1-\delta)\binom{n}{2}$ edges, and suppose that 
\begin{equation}\label{eq:CL}
\big( 1 - (r+1)\delta \big) n \geq 
\sum_{i=1}^{r}{(k_i - 1)} + k + 1,
\end{equation}
where $k := \max\{k_1,\dots,k_r\}$. We may assume that 
$k \geq \big( \frac{1}{r+1} - \delta \big)n - 1$, because otherwise \eqref{eq:CL} also holds with $k_1$ replaced by $k_1+1$ (which increases $k$ by at most $1$), meaning that we may instead prove the theorem for $k_1+1,k_2,\dots,k_r$ (which evidently implies the statement for $k_1,\dots,k_r$).

Fix any $r$-colouring of the edges of $G$.
For each $i \in [r]$, let $G_i$ be the graph on $V(G)$ whose edges are the edges of $G$ which are coloured with colour $i$. 
Our goal is to show that there is $1 \leq i \leq r$ such that $\nu(G_i) \geq k_i$.
So suppose, for the sake of contradiction, that $\nu(G_i) \leq k_i-1$ for every $i\in[r]$.
By \cref{thm:Tutte-Berge}, for each $i \in [r]$ there must be $U_i \subseteq V(G_i) = V(G)$ such that 
\[
\frac{n}{2} - \frac{1}{2} \cdot (\odd(G_i - U_i) - |U_i|) = \nu(G_i) \leq k_i-1,
\]
or, equivalently, 
$\odd(G_i - U_i) \geq n - 2(k_i-1) + |U_i|$. In particular, $G_i - U_i$ has at least $n - 2(k_i-\nolinebreak 1) + |U_i|$ connected components. 
This means that $n - |U_i| = |V(G_i - U_i)| \geq n - 2(k_i-1) + |U_i|$, and hence $|U_i| \leq k_i-1$. By \cref{lem:max_edges}, the following holds for every $i\in[r]$:
\begin{equation*}
    |E(G_i - U_i)|
      \le \binom{(n - |U_i|) - (n - 2(k_i-1) + |U_i|) + 1}{2}
      = \binom{2(k_i-1) - 2|U_i| + 1}{2}.
\end{equation*}
It follows that
\begin{equation}\label{eq:e(G)_upper_bound_Cockayne_Lorimer}
  \begin{split}
    |E(G)|
      &\leq \sum_{i = 1}^{r}{|E(G_i - U_i)|}
        + \#\{e \in E(G) : e \cap (U_1 \cup \dots \cup U_r) \neq \es\} \\
      &\leq \sum_{i = 1}^{r}{\binom{2(k_i-1) - 2|U_i|+1}{2}}
        + \binom{n}{2} - \binom{n-|U_1|-\dots-|U_r|}{2}.
  \end{split} 
\end{equation}
Now, consider the function $g(u_1,\dots,u_r)$ defined by 
\[
g(u_1,\dots,u_r) := 
\sum_{i = 1}^r {\binom{2(k_i-1) - 2u_i+1}{2}} - \binom{n-u_1-\dots-u_r}{2}.
\]
 
\begin{claim}\label{cl:optim}
	Let $u_1,\dots,u_r$ be such that $0 \leq u_i \leq k_i - 1$ for every $i \in [r]$. Then 
  \[
    g(u_1,\dots,u_r) < - \delta \binom{n}{2}.
  \]
\end{claim}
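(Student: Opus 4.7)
The plan is to turn the $r$-variable claim into a one-variable convex optimization problem, and then invoke the two numerical hypotheses at the very end. As a first step, I would substitute $v_i := k_i - 1 - u_i \in [0, k_i - 1]$ and write $V := \sum_i v_i$ and $t := \sum_i (k_i - 1)$. Using the identity $\binom{2v + 1}{2} = v(2v + 1) = 2v^2 + v$, the function $g$ rewrites as
$$g(u_1, \dots, u_r) = 2 \sum_i v_i^2 + V - \binom{n - t + V}{2}.$$
Since $v_i \leq k - 1$ for every $i$, the elementary bound $\sum_i v_i^2 \leq (k-1) V$ yields the cleaner upper bound $g \leq V(2k - 1) - \binom{n - t + V}{2}$.

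Next, I set $W := n - t + V$, which ranges over the integer interval $[n - t, n]$, and $\psi(W) := \binom{W}{2} - (W - n + t)(2k - 1)$, so that the claim reduces to proving $\psi(W) > \delta \binom{n}{2}$ for every such $W$. The function $\psi$ is a convex quadratic in $W$ whose global minimum over the reals is attained at $W = 2k - \tfrac{1}{2}$, so evaluating $\psi$ at either of the neighbouring integers $2k - 1$ or $2k$ gives
$$\psi(W) \geq \psi(2k) = k(2k - 1) - (2k - n + t)(2k - 1) = (2k - 1)(n - k - t)$$
for every integer $W$, irrespective of whether $2k$ actually lies in the interval $[n - t, n]$. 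The proof thereby reduces to the single numerical inequality $(2k - 1)(n - k - t) > \delta \binom{n}{2}$.

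To establish this, I would invoke the two hypotheses. The inequality $(1 - (r+1)\delta) n \geq t + k + 1$ is exactly $n - k - t \geq 1 + (r+1) \delta n$, and the WLOG assumption $k \geq (\tfrac{1}{r+1} - \delta) n - 1$ (asserted just before the claim) yields $2k - 1 \geq \tfrac{2n}{r+1} - 2\delta n - 3$. Multiplying these two lower bounds produces a dominant term $\tfrac{2n}{r+1} \cdot (r+1)\delta n = 2 \delta n^2$, while the target is only $\delta n(n-1)/2 \approx \delta n^2/2$. The cross term $2(r+1)\delta^2 n^2$ arising in the product is bounded above by $\delta n^2$ thanks to the hypothesis $\delta \leq \tfrac{1}{2(r+1)}$, and the remaining corrections are of order $O_r(\delta n)$; so altogether $(2k-1)(n-k-t) \geq \delta n^2 - O_r(\delta n)$, which exceeds $\delta \binom{n}{2}$ for $n$ sufficiently large in terms of $r$.

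The main obstacle is the careful bookkeeping in the final step: the dominant term is only a factor of two above the target, so the hypothesis $\delta \leq \tfrac{1}{2(r+1)}$ is used precisely to absorb the quadratic cross term. Any remaining small-$n$ regime (where the linear corrections might dominate) can be handled directly, since there the parameters $k_1, \dots, k_r$ are bounded in terms of $r$ and the inequality reduces to a finite check.
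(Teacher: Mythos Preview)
Your approach differs from the paper's: whereas the paper uses concavity in each $v_i$ separately to reduce to corner points $v_i \in \{0,k_i-1\}$ (and then a second concavity argument in the $k_i$'s), you use the single inequality $\sum_i v_i^2 \le (k-1)V$ to collapse everything to one real variable. This is a genuine simplification in spirit, and the reduction to $\psi(W)$ is carried out correctly.

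However, the final step has a real gap. Your ``single numerical inequality'' $(2k-1)(n-k-t) > \delta\binom{n}{2}$ is not always strict: take $k_1=\dots=k_r=1$ (so $k=1$, $t=0$), $\delta = \tfrac{1}{2(r+1)}$, and $n = 4(r+1)$. Then both sides equal $4(r+1)-1$, so the strict inequality fails. In this instance the claim itself still holds (since $u_i=0$ is forced and $g(0,\dots,0)=-\binom{n}{2}$), but your chain of implications does not recover it: the slack you threw away was in the step $\psi(W)\ge\psi(2k)$, which is very loose when $W=n$ is far from $2k$. Minimising $\psi$ over all of $\mathbb{Z}$ rather than over the actual range $[n-t,n]$ costs you exactly the margin you need.

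Relatedly, the appeal to a ``finite check'' for small $n$ is not a proof: while $n$ and the $k_i$ take finitely many values, $\delta$ ranges over a continuum, so the residual verification is not finite as stated. The paper's argument, by contrast, works uniformly for all $n$ with no case split. Your approach can be salvaged --- for instance by minimising $\psi$ only over $[n-t,n]$ (it suffices to check the two endpoints since $\psi$ is convex), or by disposing of the case $k=1$ separately at the outset --- but as written the argument is incomplete.
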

Before proving \cref{cl:optim}, let us complete the proof of \cref{thm:match_general} assuming this claim.
Recall that $|U_i| \leq k_i-1$ for every $i\in[r]$.
Thus, by applying \cref{cl:optim} with $u_i = |U_i|$ ($i\in[r]$), we get that $g(|U_1|,\dots,|U_r|) < -\delta \binom{n}{2}$.
On the other hand, \eqref{eq:e(G)_upper_bound_Cockayne_Lorimer} states that 
$|E(G)| \leq \binom{n}{2} + g(|U_1|,\dots,|U_r|)$, which contradicts our assumption that $|E(G)| \geq (1 - \delta)\binom{n}{2}$.
Thus, in order to complete the proof it suffices to prove \cref{cl:optim}.
\begin{proof}[Proof of \cref{cl:optim}]
	It will be convenient to set $v_i := k_i - 1 - u_i$ for $i\in[r]$. Then $0 \leq v_i \leq k_i - 1$ for every $i \in [r]$.
	Note that the inequality $g(u_1,\dots,u_r) < - \delta \binom{n}{2}$ is equivalent to having 
	\begin{equation}\label{eq:optimization_Cockayne_Lorimer}
		h(v_1,\dots,v_r) := \binom{n - \sum_{i=1}^{r}{(k_i-1)} + \sum_{i=1}^{r}{v_i}}{2} - \sum_{i=1}^{r}{\binom{2v_i+1}{2}} > \delta \binom{n}{2}.
	\end{equation}
	For $1 \leq i \leq r$, observe that if we fix the values of $(v_j : j \in [r] \setminus \{i\})$ and let $v_i$ vary, then the resulting function $h(v_1,\dots,v_r)$ of $v_i$ is a quadratic function in which the coefficient of $v_i^2$ is $-\frac{3}{2} < 0$. Therefore, this function is concave. It follows that for any choice of fixed values of $(v_j : j \in [r] \setminus \{i\})$, the minimum of $h(v_1,\dots,v_r)$ over $0 \leq v_i \leq k_i-1$ is obtained either at $v_i = 0$ or at $v_i = k_i-1$, and is not obtained at any point in the open interval $(0,k_i-1)$. We conclude that if $(v_1,\dots,v_r)$ is a minimum point of $h(v_1,\dots,v_r)$, then $v_i \in \{0,k_i-1\}$ for every $1 \leq i \leq r$. So we see that in order to verify \eqref{eq:optimization_Cockayne_Lorimer}, it is enough to show that $h(v_1,\dots,v_r) > \delta \binom{n}{2}$ for $v_1,\dots,v_r$ satisfying $v_i \in \{0,k_i-1\}$ for every $1 \leq i \leq r$.
	
	Let $I \subseteq [r]$, and suppose that $v_i = k_i - 1$ for $i \in I$ and $v_i = 0$ for $i \in [r] \setminus I$. Then the value of $h(v_1,\dots,v_r)$ is:
	\begin{align*}
	    &\binom{n - \sum_{i \in [r] \setminus I}{(k_i-1)}}{2} - 
	    \sum_{i \in I}{\binom{2k_i-1}{2}}\\ &\geq 
	    \binom{\sum_{i \in I}{(k_i-1)} + k + 1 + (r+1)\delta n}{2} -
	    \sum_{i \in I}{\binom{2k_i-1}{2}} \\ &\geq 
	    \binom{\sum_{i \in I}{(k_i-1)} + k}{2} +  
	    (k+1) \cdot (r+1)\delta n - 
	    \sum_{i \in I}{\binom{2k_i-1}{2}}.
	\end{align*}
	Here, the first inequality uses \eqref{eq:CL}, and the second inequality follows from the fact that $\binom{x+y}{2} \geq \binom{x}{2} + xy$ for all $x,y \geq 0$. Now, since 
	$k \geq \big( \frac{1}{r+1} - \delta \big)n - 1$ (as mentioned in the beginning of the proof) and $\delta \leq \frac{1}{2(r+1)}$ (by assumption), we have 
	$$
	(k+1) \cdot (r+1)\delta n \geq 
	\left( \frac{1}{r+1} - \delta \right)n \cdot (r+1)\delta n \geq 
	\frac{n}{2(r+1)} \cdot (r+1) \delta n = 
	\frac{\delta n^2}{2} > \delta \binom{n}{2}.
	$$
	Thus, to establish \cref{cl:optim}, it suffices to verify that 
	\begin{equation}\label{eq:optimization_Cockayne_Lorimer_aux}
	\binom{\sum_{i \in I}{(k_i-1)} + k}{2} - 
	    \sum_{i \in I}{\binom{2k_i-1}{2}} \geq 0.
	\end{equation}
	Observe that for every $i \in I$, if we fix the values of $(k_j : j \in I \setminus \{i\})$ and consider the left-hand side of \eqref{eq:optimization_Cockayne_Lorimer_aux} as a one-variable function of $k_i$, then this function is quadratic and the coefficient of $k_i^2$ is $-3/2 < 0$. Thus, this function is concave. It follows that at a minimum point of the left-hand side of \eqref{eq:optimization_Cockayne_Lorimer_aux}, we must have $k_i \in \{1,k\}$ for every $i \in I$ (recall that $k_i \leq k$ for every $1 \leq i \leq r$). So let $J \subseteq I$, and suppose that $k_i = k$ for every $i \in J$ and $k_i = 1$ for every $i \in I \setminus J$. Setting $s := |J|$, we see that the left-hand side of \eqref{eq:optimization_Cockayne_Lorimer_aux} equals
	\begin{align*}
	    \binom{s \cdot (k-1) + k}{2} - s \cdot \binom{2k-1}{2} = \frac{(k-1)(s-1)}{2} \cdot \left( (s-1)k - s \right).
	\end{align*}
	So it remains to show that $f(s) := (k-1)(s-1)\cdot \left( (s-1)k - s \right) \geq 0$ for every value of $s$. If $k = 1$ then $f(s) = 0$ for every $s$, so suppose that $k \geq 2$. Now, we have $f(0) = (k-1)k \geq 0$, $f(1) = 0$, and $f(s) \geq (s-1)k-s = (k-1)s - k \geq 2(k-1) - k \geq 0$ for every $s \geq 2$, as required.
\end{proof}
\noindent
With \cref{cl:optim} established, the proof of \cref{thm:match_general} is complete.
\qed

~

It should be noted that a MathOverflow post due to F. Petrov~\cite{Petrov} contains a derivation of the Cockayne--Lorimer result~\cite{CL75} using the Tutte--Berge formula in a similar manner to our proof of \cref{thm:match_general}.

\section{Large monochromatic linear forests in pseudorandom graphs}
\label{sec:paths}
The goal of this section is to prove \cref{thm:paths}.
In fact, we prove a stronger statement, namely \cref{thm:paths:pseudo} below.
This theorem extends \cref{thm:paths} to the more general setting of pseudorandom graphs, and will be used in the proof of \cref{thm:ham}. 

Let us now introduce some definitions. For a pair of disjoint vertex-sets $U,W$ in a graph, the \defn{density} of $(U,W)$ is defined as 
$d(U,W) := |E(U,W)|/(|U||W|)$. For $\gamma,p\in (0,1]$, we say that $G=(V,E)$ is \defn{$(\gamma,p)$-pseudorandom} if 
for any two disjoint $U,W\subseteq V$ with $|U|,|W|\ge\gamma|V|$ we have $|d(U,W) - p| \leq \gamma p$.
We now recall the known fact that if $G = (V,E)$ is $(\gamma,p)$-pseudorandom then every set $U \subseteq V$ of size at least $2\gamma |V|$ satisfies 
\begin{equation}\label{eq:quasirandom}
\left| \frac{|E(U)|}{\binom{|U|}{2}} - p \right| \leq \gamma p.
\end{equation}
To see that \eqref{eq:quasirandom} holds, take a random partition of $U$ into two equal parts $U_1,U_2$ and observe that the expected value of $|E(U_1,U_2)|$ is 
$
|E(U)| \cdot \binom{|U|}{2}^{-1} \cdot |U_1||U_2|.
$
On the other hand, we have $|d(U_1,U_2) - p| \leq \gamma p$ for every such choice of $U_1,U_2$. Therefore, 
\[
\left| \frac{|E(U)|}{\binom{|U|}{2}} - p \right| =   
\left| \frac{\E|E(U_1,U_2)|}{|U_1||U_2|} - p \right| \leq \gamma p,
\]
as required.

Note that if $G$ is a $(\gamma,p)$-pseudorandom graph on $n$ vertices (for any $p \in (0,1]$) then there exists an edge between any two disjoint sets of size at least $\gamma n$.

~

The following is the main result of this section, and will play an important role in the proof of \cref{thm:ham}. 

\begin{theorem}\label{thm:paths:pseudo}
  Let $r\ge 2$ be an integer and let $\eps>0$.
  Then there exist $\gamma=\gamma(r,\eps)$ and $K=K(r,\eps)$ such that the following holds.
  Let $G=(V,E)$ be a $(\gamma,p)$-pseudorandom graph for some $p\in (0,1]$, and suppose $|V|=n$ is large enough (in terms of $r,\eps$).
  Then, in any $r$-colouring of the edges of $G$ there exists a path of length at least $(2/(r+1)-\eps)n$ in which all but at most $K$ of the edges are of the same colour.
\end{theorem}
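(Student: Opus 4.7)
The plan is to combine the multicolour sparse regularity lemma (\cref{thm:sparse_reg}) with our extension of the Cockayne--Lorimer theorem (\cref{thm:match}) to locate a large \emph{monochromatic linear forest} inside $G$, and then to stitch its components into a single path via a bounded number of ``bridge'' edges supplied by the pseudorandomness of $G$. Choose auxiliary parameters in the hierarchy $\eps \gg \eta \gg 1/M_0 \gg \gamma$, where $M_0 = M_0(\eta, r)$ is the maximum number of parts produced by the regularity lemma when invoked with parameter $\eta$, and set $K := M_0$.

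Fix an arbitrary $r$-colouring of $E(G)$, and for $c \in [r]$ let $G_c$ denote its colour-$c$ subgraph. Apply the multicolour sparse regularity lemma to $(G_1, \ldots, G_r)$ to obtain an equipartition $V = V_0 \cup V_1 \cup \cdots \cup V_M$ with $M \le M_0$, $|V_0| \le \eta n$, and $|V_i| = m \sim n/M$ for $i \geq 1$, such that all but at most $\eta \binom{M}{2}$ of the pairs $(V_i, V_j)$ are $(\eta, p)$-regular in \emph{every} $G_c$ simultaneously. Let $R$ be the reduced graph on $[M]$ in which $\{i,j\}$ is an edge precisely when the pair $(V_i, V_j)$ is $(\eta, p)$-regular in every colour. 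Since $\gamma \ll 1/M_0$, pseudorandomness of $G$ forces $d_G(V_i, V_j) \geq (1-\eta)p$ for every such pair, hence $|E(R)| \geq (1-\eta)\binom{M}{2}$. For each edge $\{i,j\}$ of $R$, by pigeonhole there is a colour $c(i,j) \in [r]$ in which $(V_i,V_j)$ is $(\eta,p)$-regular of density at least $(1-\eta)p/r$; assign this colour to $\{i,j\}$.

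Apply \cref{thm:match} to the $r$-edge-coloured graph $R$ (with $\delta = \eta \le 1/(2(r+1))$) to obtain a monochromatic matching $\{\{i_1, j_1\}, \ldots, \{i_t, j_t\}\}$ of size $t \geq \floor{(1/(r+1) - \eta)M}$ in some common colour $c^*$. For each $\ell \in [t]$, the bipartite graph $G_{c^*}[V_{i_\ell}, V_{j_\ell}]$ is $(\eta,p)$-regular of density at least $(1-\eta)p/r$, and so a standard embedding lemma for paths in sparse regular bipartite pairs yields a path $P_\ell \subseteq G_{c^*}[V_{i_\ell}, V_{j_\ell}]$ of length at least $(2-\eta')m$, for some $\eta' = \eta'(\eta, r) \to 0$ as $\eta \to 0$, whose two endpoints may moreover be prescribed to lie in any designated subsets of $V_{i_\ell}$ and $V_{j_\ell}$ of size at least $2\eta m$. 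The union $P_1 \cup \cdots \cup P_t$ is then a linear forest, all of whose edges are coloured by $c^*$, of total length at least $t(2-\eta')m \ge (2/(r+1) - \eps)n$, provided $\eta$ is chosen small enough.

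The main obstacle is to merge these $t$ paths into a single one while introducing only a bounded number of non-$c^*$ edges; this is where pseudorandomness enters crucially. Proceed greedily: for $\ell = 1, \ldots, t-1$, assuming that $P_1, \ldots, P_\ell$ have already been bridged into a single path terminating in some cluster $V_{j_\ell}$, use the endpoint-flexibility of the embedding lemma (which allows one to re-route each path so that its terminal endpoint lies in any prescribed admissible set of size $\geq 2\eta m$) to pick sets $A_\ell \subseteq V_{j_\ell}$ and $B_{\ell+1} \subseteq V_{i_{\ell+1}}$ of candidate endpoints for $P_\ell$ and $P_{\ell+1}$. Since at most $2(t-1) \leq 2M_0 \ll \eta m$ vertices have been ``used'' during previous bridging steps, both sets still have size at least $\eta m \gg \gamma n$. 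As $V_{j_\ell}$ and $V_{i_{\ell+1}}$ are distinct parts, $A_\ell$ and $B_{\ell+1}$ are disjoint, so pseudorandomness produces an edge in $E_G(A_\ell, B_{\ell+1})$ which we insert as the bridge between suitably re-routed copies of $P_\ell$ and $P_{\ell+1}$. After the $t-1 \leq M \leq K$ bridging steps we obtain a single path of length at least $(2/(r+1) - \eps)n$, all but at most $K$ of whose edges are of colour $c^*$.
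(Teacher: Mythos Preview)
Your overall strategy is exactly the one in the paper: apply the multicolour sparse regularity lemma, colour each edge of the reduced graph by a majority colour, invoke \cref{thm:match} to extract a large monochromatic matching, embed a long monochromatic path inside each matched regular pair, and then bridge the paths using the pseudorandomness of $G$. So the architecture is correct.

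There are, however, two places where your execution diverges from the paper and where your version is shakier. First, you appeal to an unspecified ``standard embedding lemma for paths in sparse regular bipartite pairs'' with prescribed-endpoint flexibility. In the sparse setting such endpoint control is not free; it is not an immediate consequence of $(\eta,p)$-regularity and would need a separate argument. The paper sidesteps this entirely by using the elementary DFS fact (\cref{lem:paths_DFS}): if every pair of opposite sets of size $k$ spans an edge, then there is a path of length $2m-4k$. Regularity is used only to verify this hypothesis with $k=\eta m$, and no control on the endpoints is needed.

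Second, and relatedly, your bridging step is overengineered. You propose to ``re-route'' each $P_\ell$ so that its terminal endpoint lands in a prescribed set $A_\ell$; this is precisely the endpoint-flexibility you have not justified. The paper's fix is much simpler: once the paths $P_1,\dots,P_t$ are found (with no endpoint conditions), let $A_\ell$ and $B_\ell$ be the first and last $\gamma n$ vertices \emph{along} $P_\ell$. Pseudorandomness then supplies an edge from $B_\ell$ to $A_{\ell+1}$ for each $\ell$, and concatenating via these edges costs at most $2\gamma n$ vertices per connection, i.e.\ at most $2T\gamma n\le \eps n/2$ in total. No re-embedding is required. If you replace your bridging paragraph with this argument, the proof goes through cleanly.
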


The proof of \cref{thm:paths:pseudo} relies on (a ``multicolour" version of) the well-known {\em sparse regularity lemma}, proved by Kohayakawa~\cite{Koh97} and R\"{o}dl (see~\cite{Con14}), and later in a stronger form by Scott~\cite{Sco11}. To state this result, we now introduce some additional definitions.
A pair $(U,W)$ of disjoint vertex-sets is called \defn{$(\delta,q)$-regular} if for all $U' \subseteq U$, $W' \subseteq W$ with $|U'| \geq \delta |U|$ and $|W'| \geq \delta |W|$ it holds that $|d(U',W') - d(U,W)| \leq \delta q$. An \defn{equipartition} of a set is a partition in which the sizes of any two parts differ by at most $1$ (to keep the presentation clean, we will ignore divisibility issues and just assume that all parts have the same size). 
Let $G_1,\dots,G_r$ be graphs on the same vertex-set $V$ of size $n$. An equipartition $\{V_1,\dots,V_t\}$ of $V$ is said to be \defn{$(\delta)$-regular} with respect to $(G_1,\dots,G_r)$ if for all but at most $\delta \binom{t}{2}$ of the pairs $(V_i,V_j)$, $1 \leq i < j \leq t$, it holds that for every $\ell\in[r]$, the pair $(V_i,V_j)$ is $(\delta,q)$-regular in $G_{\ell}$, where 
$q := (|E(G_1)| + \dots + |E(G_r)|)/\binom{n}{2}$. 
We are now ready to state the multicolour sparse regularity lemma from \cite{Sco11}. 
\begin{theorem}[Multicolour sparse regularity lemma~\cite{Sco11}]
  \label{thm:sparse_reg}
For every $r,t_0 \geq 1$ and $\delta \in (0,1)$ there exists $T = T(r,t_0,\delta)$ such that for every collection $G_1,\dots,G_r$ of graphs on the same vertex-set $V$, there is an equipartition of $V$ which is $(\delta)$-regular with respect to $(G_1,\dots,G_r)$, and has at least $t_0$ and at most $T$ parts.
\end{theorem}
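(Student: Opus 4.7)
The plan is to combine the multicolour sparse regularity lemma (\cref{thm:sparse_reg}) with the almost-complete matching theorem (\cref{thm:match}), applied to the reduced graph, and then stitch together monochromatic paths inside the matched pairs using pseudorandomness of $G$.

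Fix small parameters $\delta \ll \eps' \ll \eps/r$, a large integer $t_0$, and set $T := T(r, t_0, \delta)$ from \cref{thm:sparse_reg}; take $\gamma = \gamma(r, \eps) \ll \delta/T$. Apply \cref{thm:sparse_reg} to the colour classes $G_1, \ldots, G_r$, obtaining a $(\delta)$-regular equipartition $V_1, \ldots, V_t$ with $t_0 \leq t \leq T$ and $q := |E(G)|/\binom{n}{2} = (1 \pm \gamma)p$ by \eqref{eq:quasirandom}. Let $R$ be the reduced graph on $[t]$, where $ij$ is an edge whenever $(V_i, V_j)$ is $(\delta, q)$-regular in every colour; then $R$ has at least $(1 - \delta)\binom{t}{2}$ edges. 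Since $\gamma \ll 1/T$, pseudorandomness gives $d(V_i, V_j) \geq (1 - \gamma)p$ for every such pair, so colouring each edge $ij$ of $R$ by a colour $c(i,j)$ that maximises $d_{G_{c(i,j)}}(V_i, V_j)$ assigns a density at least $(1 - \gamma)p/r$. Applying \cref{thm:match} to $R$ with density parameter $\delta$ now yields a monochromatic matching $M$ in some colour $c$ of size at least $(1/(r+1) - \delta)t$.

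Within each matched pair $\{V_i, V_j\} \in M$, the bipartite graph $G_c[V_i, V_j]$ is $(\delta, q)$-regular with density at least $(1 - \gamma)p/r$. A standard DFS/greedy path-extension argument, which uses only that all but a $\delta$-fraction of vertices on each side have the expected degree into the other side, produces a path $P_{ij}$ in $G_c[V_i, V_j]$ of length at least $(1 - \eps')(|V_i| + |V_j|)$. Summed over $M$, these monochromatic paths have total length at least $(2/(r+1) - O(\delta + \eps'))n$. To concatenate them into a single path, I order the matching edges and, for consecutive paths $P_{ij}$ and $P_{i'j'}$, take the last $\gamma n$ vertices of $P_{ij}$ and the first $\gamma n$ vertices of $P_{i'j'}$; each of these sets has size $\gamma n \le |V_i|$, so pseudorandomness of $G$ supplies an edge between them. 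Splicing the two paths at this edge loses at most $2\gamma n$ vertices per join, hence at most $2\gamma T n = o(n)$ vertices in total, and contributes one (possibly miscoloured) edge per join, for a total of at most $|M| - 1 < T$ edges not of colour $c$. Choosing $K := T = K(r, \eps)$ gives the required path of length $\geq (2/(r+1) - \eps)n$ with at most $K$ edges not of colour $c$.

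The principal technical hurdle is the path-finding step inside each sparse regular pair. While embedding arbitrary spanning structures in sparse regular pairs can be delicate, extracting a nearly-spanning \emph{path} requires only a careful greedy argument exploiting the ``most vertices are typical'' consequence of $(\delta, q)$-regularity, and hence succeeds without additional structural hypotheses on $G$ beyond pseudorandomness. A minor additional issue is that the splicing step in the final paragraph must be performed sequentially and the vertices dedicated to earlier joins excluded from later ones; since there are fewer than $T$ joins, each using only a bounded number of vertices, this causes at most $O(T) = o(n)$ further loss.
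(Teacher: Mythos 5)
Your proposal does not prove the statement in question. The statement is the multicolour sparse regularity lemma itself (\cref{thm:sparse_reg}), which the paper does not prove but cites from Scott~\cite{Sco11}. Your argument's very first step is ``Apply \cref{thm:sparse_reg} to the colour classes $G_1,\dots,G_r$'' --- that is, you assume precisely the result you were asked to establish, which makes the argument circular as a proof of this theorem. What you have actually written is (a reasonable sketch of) the proof of \cref{thm:paths:pseudo}/\cref{thm:paths}, i.e.\ the deduction of the long almost-monochromatic path from the regularity lemma together with \cref{thm:match}, \cref{lem:paths_DFS} and pseudorandomness; that is a different statement, proved elsewhere in the paper.

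A genuine proof of \cref{thm:sparse_reg} would have to follow the regularity-lemma paradigm: one runs an energy/index increment argument over equipartitions, where the index is measured against the combined density $q = (|E(G_1)|+\dots+|E(G_r)|)/\binom{n}{2}$, and one shows that if a partition fails to be $(\delta)$-regular for the family $(G_1,\dots,G_r)$ then refining along the witnesses of irregularity (simultaneously for all $r$ graphs) increases the index by a quantity bounded below in terms of $\delta$, while the index itself stays bounded above --- the delicate point in the sparse setting, handled by Scott via a suitable choice of bounded energy functional, is that no upper-uniformity (no ``no dense spots'') hypothesis is imposed on the $G_\ell$. None of these ingredients appears in your write-up, so as a proof of the stated theorem it has a fundamental gap: it addresses the wrong statement and presupposes the right one.
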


\noindent
Another tool we will use in the proof of \cref{thm:paths:pseudo} is the following simple lemma from~\cite{BKS12} (see Lemma 4.4 there).

\begin{lemma}
\label{lem:paths_DFS}
Let $n,k \geq 1$ be integers, and let $F$ be a bipartite graph with sides $X,Y$ of size $n$ each. Suppose that there is an edge between every pair of sets $X' \subseteq X$ and $Y' \subseteq Y$ with $|X'| = |Y'| = k$. Then $F$ contains a path of length at least $2n-4k$.
\end{lemma}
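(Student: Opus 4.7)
The plan is to prove \cref{lem:paths_DFS} via the depth-first search (DFS) algorithm, a standard tool for extracting long paths from graphs with expansion-type hypotheses. Run DFS on $F$, maintaining a partition $V(F) = S \cup T \cup U$, where $S$ is the set of fully processed vertices, $T$ is the current DFS stack, and $U$ is the set of as-yet-unvisited vertices. At each step the algorithm either pushes an unvisited neighbour of the top of $T$ onto $T$, or pops the top of $T$ into $S$ when no such neighbour exists. Two standard DFS invariants do all the work: (i) the vertices of $T$ induce a path in $F$ at all times, and (ii) there is no edge of $F$ between $S$ and $U$, since a vertex enters $S$ only once it has no neighbour in $U$, and $U$ only shrinks. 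Because $F$ is bipartite with parts $X$ and $Y$, the path on $T$ alternates between the two sides, so $\bigl||T \cap X| - |T \cap Y|\bigr| \le 1$ throughout.

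The hypothesis then translates cleanly into size constraints on $S$ and $U$. Indeed, by (ii) there is no edge between $S \cap X$ and $U \cap Y$, nor between $S \cap Y$ and $U \cap X$; combined with the assumption that every pair of $k$-subsets of $X$ and $Y$ spans an edge of $F$, this forces
\[
  |S \cap X| \ge k \;\Longrightarrow\; |U \cap Y| \le k-1
  \qquad \text{and} \qquad
  |S \cap Y| \ge k \;\Longrightarrow\; |U \cap X| \le k-1
\]
to hold at every moment of the DFS.

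The central step is to stop the DFS at the right moment. Assume $n \ge 2k$, since otherwise $2n - 4k \le 0$ and there is nothing to prove. Let $\tau$ be the first time at which both $|S \cap X| \ge k$ and $|S \cap Y| \ge k$; this is well-defined because at termination $S = X \cup Y$, and each of $|S \cap X|$ and $|S \cap Y|$ increases by at most one per step. At time $\tau$, one of these two sizes has just reached $k$—say $|S \cap Y| = k$, by symmetry. The two implications above then give $|U \cap X|, |U \cap Y| \le k-1$, so
\[
  |T \cap Y| \;=\; n - |S \cap Y| - |U \cap Y| \;\ge\; n - k - (k-1) \;=\; n - 2k + 1,
\]
and alternation yields $|T \cap X| \ge |T \cap Y| - 1 \ge n - 2k$. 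Adding gives $|T| \ge 2n - 4k + 1$, so the path induced by $T$ has length at least $2n - 4k$. No genuine obstacle arises; the only mildly delicate point is confirming that $T$ is non-empty at time $\tau$ (in particular, that $\tau$ does not coincide with a transition between two DFS trees where $T$ has just emptied), but the bound $|T \cap Y| \ge n - 2k + 1 \ge 1$ in the non-trivial range $n \ge 2k$ takes care of this automatically.
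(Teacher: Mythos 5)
Your argument is correct: the DFS invariants, the bipartite alternation of the stack, and the stopping time at which both $|S\cap X|\ge k$ and $|S\cap Y|\ge k$ first hold together give $|T|\ge 2n-4k+1$ exactly as claimed. This is essentially the same careful DFS analysis that the paper relies on for \cref{lem:paths_DFS} (it cites Lemma~4.4 of~\cite{BKS12} rather than reproving it), so there is nothing further to add.
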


The proof of \cref{lem:paths_DFS} proceeds by a careful analysis of the DFS algorithm, an idea which originated in~\cite{BKS12} and has since been widely used in the study of paths in random and pseudorandom graphs (see also~\cite{Kri16} and~\cite{Let16}*{Corollary 2.1}). 

We are now ready to prove \cref{thm:paths:pseudo}. 

\begin{proof}[Proof of \cref{thm:paths:pseudo}]
  Let $r \geq 2$ and let $\eps \in (0,1)$. 
Fix $\delta > 0$ to be small enough so that $\delta < 1/(4r)$ and 
$\big(1/(r+1) - 2\delta \big) \cdot (2 - 4\delta) \geq 2/(r+1) - \eps/2$
(for this second requirement, choosing $\delta \leq \eps/12$ suffices).
  Set $t_0 := 1/\delta$, and let $T = T(r,t_0,\delta)$ be as in \cref{thm:sparse_reg}. We will prove the theorem with $\gamma = \gamma(r,\eps) := \eps/(4T)$ and $K = K(r,\eps) := T$. 
  
  Let $p \in (0,1]$ and let $G$ be a $(\gamma,p)$-pseudorandom graph on $n$ vertices (for some sufficiently large $n$). Set $q := |E(G)|/\binom{n}{2}$, and note that by \eqref{eq:quasirandom} we have $(1 - \gamma)p \leq q \leq (1 + \gamma)p$. Let $f : E(G) \rightarrow [r]$ be an $r$-colouring of the edges of $G$.
  For each $i \in [r]$, let $G_i$ be the graph on $V(G)$ whose edges are the edges of $G$ coloured by colour $i$.
  Let $\{V_1,\dots,V_t\}$ be a $(\delta)$-regular equipartition with respect to $(G_1,\dots,G_r)$, where $t_0 \leq t \leq T$. Let $H$ be the graph on $[t]$ in which $\{i,j\} \in E(H)$ if and only if $(V_i,V_j)$ is $(\delta,q)$-regular in $G_{\ell}$ for every $\ell \in [r]$. 
  The definition of a $(\delta)$-regular partition implies that $|E(H)| \geq (1-\delta)\binom{|V(H)|}{2}$. 
  
  We now define a ``reduced" edge-colouring of $H$. Let $\{i,j\}$ be an edge of $H$.
  Since \linebreak $|V_i| = |V_j| = n/t \geq n/T \geq \gamma n$, we have $d_G(V_i,V_j) \geq (1 - \gamma)p \geq p/2$ (as $G$ is $(\gamma,p)$-pseudorandom). 
  Since $d_G(V_i,V_j) = d_{G_1}(V_i,V_j) + \cdots + d_{G_{\ell}}(V_i,V_j)$, there must be some $\ell \in [r]$ such that $d_{G_{\ell}}(V_i,V_j) \geq p/(2r)$. Colour the edge $\{i,j\}$ by colour $\ell$ (if there is more than one possible colour, choose one arbitrarily).
  
  Since $|E(H)| \geq (1-\delta)\binom{|V(H)|}{2}$, \cref{thm:match} implies that $H$ contains a monochromatic matching of size at least
  $\big\lfloor \big(1/(r+1) - \delta \big)t \rfloor \geq 
  \big(1/(r+1) - \delta \big)t - 1 \geq 
 \big(1/(r+1) - 2\delta \big)t
  $
  where the inequality holds because $t \geq t_0 = 1/\delta$. Suppose, without loss of generality, that this matching is in colour $1$, and denote its edge-set by $M$. 
  Fix any $e = \{i,j\} \in M$. Since $\{i,j\}$ is an edge of $H$ coloured with colour $1$, it must be the case that $d_{G_1}(V_i,V_j) \geq p/(2r)$ and that $(V_i,V_j)$ is $(\delta,q)$-regular in $G_1$.
  Then for every $V'_i \subseteq V_i,\ V'_j \subseteq V_j$ with $|V'_i| \geq \delta |V_i|$ and $|V'_j| \geq \delta |V_j|$ it holds that $d_{G_1}(V'_i,V'_j) \geq d_{G_1}(V_i,V_j) - \delta q \geq p/(2r) - \delta q \geq p/(2r) - \delta (1 + \gamma)p \geq p/(2r) - \delta \cdot 2p > 0$, where the last inequality holds due to our choice of $\delta$.
  So we see that $G_1$ contains an edge between every pair of sets $V'_i \subseteq V_i,\ V'_j \subseteq V_j$ with $|V'_i| \geq \delta |V_i| = \delta n/t$ and $|V'_j| \geq \delta |V_j| = \delta n/t$.
  By \cref{lem:paths_DFS} with $k := \delta n/t$, the bipartite subgraph of $G_1$ with sides $V_i$ and $V_j$ contains a path $P_e$ of length at least $(2 - 4\delta)n/t$. 
  
  Observe that the paths $(P_e : e \in M)$ are pairwise-disjoint (as $M$ is a matching in $H$), and that the number of vertices covered by these paths is at least 
  \[
  |M| \cdot (2 - 4\delta)n/t \geq 
  \big(1/(r+1) - 2\delta \big)t \cdot (2 - 4\delta)n/t \geq 
  (2/(r+1) - \eps/2)n,
  \]
  where the last inequality uses our choice of $\delta$.
  
  Finally, put $k = |M|$, noting that $k \leq t \leq T$, and enumerate the paths $(P_e : e \in M)$ as $P_1,\dots,P_k$. For each $1 \leq i \leq |M|$, let $A_i,B_i$ denote the first, respectively last, $\gamma n$ vertices of $P_i$. 
  Since $G$ is $(\gamma,p)$-pseudorandom, there exists an edge $e_i=\{b_i,a_{i+1}\}$ between $b_i\in B_i$ and $a_{i+1}\in A_{i+1}$ for every $i=1,\ldots,k-1$.
  Let $a_1$ be the first vertex of $P_1$ and let $b_k$ be the last vertex of $P_k$.
  Let $P$ be the path obtained by concatenating (parts of) the paths $P_1,\ldots,P_k$ using the edges $e_1,\dots,e_{k-1}$, namely,
  \[
    P = a_1 \xrightarrow{P_1} b_1 \xrightarrow{e_1}
        a_2 \xrightarrow{P_2} b_2 \xrightarrow{e_2}
        \cdots \xrightarrow{e_{k-1}}
        a_k \xrightarrow{P_k} b_k.
  \]
   It is easy to see that 
  \[
  |P| \geq |P_1| + \dots + |P_k| - (2k-2)\cdot \gamma n \geq (2/(r+1) - \eps/2)n - 2T\gamma n \geq (2/(r+1)-\eps)n,
  \]
  where in the last inequality we used our choice of $\gamma$. Moreover, all edges of $P$ except for $e_1,\dots,e_{k-1}$ have the same colour. As $k \leq T = K$, the path $P$ satisfies all the required properties, completing the proof.   
\end{proof}

In view of \cref{thm:paths:pseudo}, in order to obtain \cref{thm:paths} it is enough to prove that random graphs (with sufficiently high edge density) are \whp{} pseudorandom.

\begin{lemma}\label{lem:gnp_is_pseudo}
  For every $\gamma>0$ there exists $C=C(\gamma)>0$ such that if $p\ge C/n$ then $G\sim G(n,p)$ is \whp{} $(\gamma,p)$-pseudorandom.
\end{lemma}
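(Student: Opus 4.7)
The plan is to prove this via a standard Chernoff plus union bound argument. For fixed disjoint subsets $U, W \subseteq V$ with $|U|, |W| \geq \gamma n$, the random variable $|E_G(U,W)|$ is distributed as $\Bin(|U||W|, p)$, with mean $\mu = |U||W|p$. A Chernoff bound of the form $\pr(|X - \mu| > \gamma \mu) \leq 2\exp(-\gamma^2 \mu/3)$ then yields
\[
  \pr\bigl(|d_G(U,W) - p| > \gamma p\bigr) \leq 2\exp(-\gamma^2 |U||W|p/3) \leq 2\exp(-\gamma^4 n^2 p/3),
\]
using $|U|, |W| \geq \gamma n$.

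Next I would take a union bound over all pairs of disjoint vertex-subsets $(U, W)$ of sizes at least $\gamma n$. The number of such pairs is at most $4^n$ (each vertex independently lies in $U$, in $W$, or in neither, with at most two ``sides''), so the probability that $G$ fails to be $(\gamma,p)$-pseudorandom is at most
\[
  4^n \cdot 2\exp(-\gamma^4 n^2 p/3) \leq 2\exp\bigl(n \log 4 - \gamma^4 C n/3\bigr),
\]
where we used $p \geq C/n$. Choosing $C = C(\gamma)$ large enough (e.g., $C > 6\log 4/\gamma^4$) makes the exponent tend to $-\infty$ linearly in $n$, so the failure probability is $o(1)$.

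There is no genuine obstacle here; the only minor point to be careful about is that the definition of pseudorandomness requires the condition to hold for \emph{every} such pair $(U,W)$ simultaneously, which is exactly what the union bound delivers. It also suffices (and is customary) to restrict the union bound to pairs with $|U|, |W| = \lceil \gamma n \rceil$ exactly, since monotonicity of the Chernoff estimate in $|U||W|$ means the larger the sets, the sharper the concentration — but passing back to arbitrary larger sets requires just a tiny loss in constants which can be absorbed by enlarging $C$. Finally, one should verify that $\mu = |U||W|p \geq \gamma^2 n^2 p \to \infty$ so that the Chernoff estimate is applicable, which is immediate from $p \geq C/n$ and $n \to \infty$.
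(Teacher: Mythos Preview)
Your proposal is correct and follows essentially the same approach as the paper: a Chernoff bound on $|E_G(U,W)|$ for each fixed pair, followed by a union bound over the at most $4^n$ pairs, with $C$ chosen large enough to beat the entropy. The paper's version is nearly identical, only less explicit about the Chernoff constant; your aside about restricting to sets of size exactly $\lceil \gamma n \rceil$ is unnecessary since your main argument already handles all larger sets directly.
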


In the proof of \cref{lem:gnp_is_pseudo} and in several other proofs in the next section we will make use of the following version of Chernoff bounds (see, e.g., in,~\cite{JLR}*{Chapter 2}).

\begin{theorem}[Chernoff bounds]\label{thm:chernoff}
  Let $X=\sum_{i=1}^n X_i$, where $X_i\sim\Bernoulli(p_i)$ are independent, and let $\mu=\E{X}=\sum_{i=1}^n p_i$.
  Let $0<\alpha<1<\beta$.
  Then
  \begin{align*}
    \pr(X\le \alpha\mu) &\le \exp(-\mu(\alpha\log{\alpha}-\alpha+1)),\\
    \pr(X\ge \beta\mu) &\le \exp(-\mu(\beta\log{\beta}-\beta+1)).
  \end{align*}
\end{theorem}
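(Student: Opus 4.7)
The plan is to derive both tail bounds via the standard exponential moment (Chernoff) method, exploiting independence of the $X_i$, and then optimise the free parameter to obtain the stated rate functions. I would handle the two inequalities separately but in a parallel fashion, since they differ only in the sign of the auxiliary parameter.

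For the upper tail, I would first fix an arbitrary $t>0$ and apply Markov's inequality to $e^{tX}$ to get
\[
\pr(X\ge \beta\mu)=\pr\!\left(e^{tX}\ge e^{t\beta\mu}\right)\le e^{-t\beta\mu}\,\E\!\left[e^{tX}\right].
\]
By independence, $\E[e^{tX}]=\prod_{i=1}^n \E[e^{tX_i}]=\prod_{i=1}^n\bigl(1+p_i(e^t-1)\bigr)$. The key (and routine) analytic step is the inequality $1+x\le e^x$, which I would apply with $x=p_i(e^t-1)$ to obtain $\E[e^{tX}]\le\exp\!\bigl(\mu(e^t-1)\bigr)$. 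Combining with the Markov step yields
\[
\pr(X\ge \beta\mu)\le \exp\!\bigl(\mu(e^t-1)-t\beta\mu\bigr),
\]
valid for every $t>0$. I would then optimise by differentiating the exponent in $t$, which gives $e^t=\beta$, i.e.\ $t=\log\beta$ (legitimate because $\beta>1$ ensures $t>0$). Substituting back produces the desired $-\mu(\beta\log\beta-\beta+1)$ in the exponent.

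For the lower tail, the argument mirrors the above but with a negative parameter. For $t<0$ the function $x\mapsto e^{tx}$ is decreasing, so
\[
\pr(X\le \alpha\mu)=\pr\!\left(e^{tX}\ge e^{t\alpha\mu}\right)\le e^{-t\alpha\mu}\,\E\!\left[e^{tX}\right]\le \exp\!\bigl(\mu(e^t-1)-t\alpha\mu\bigr),
\]
where the final inequality uses independence and $1+x\le e^x$ again (this bound is valid for any real $t$, positive or negative). Optimising in $t$ now gives $e^t=\alpha$, i.e.\ $t=\log\alpha<0$ (which is admissible because $\alpha<1$), and substituting yields $-\mu(\alpha\log\alpha-\alpha+1)$.

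There is no real obstacle here: the only subtlety is making sure the optimal $t$ has the correct sign in each case, which is precisely what the assumptions $\beta>1$ and $0<\alpha<1$ provide, and checking that the elementary bound $1+p_i(e^t-1)\le \exp(p_i(e^t-1))$ applies in both regimes (it does, since $p_i(e^t-1)$ is real and $1+x\le e^x$ holds for all $x\in\mathbb{R}$). A brief remark that the function $\phi(x)=x\log x-x+1$ is nonnegative on $(0,\infty)$ (so both bounds are nontrivial) would round out the presentation.
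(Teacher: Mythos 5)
Your proof is correct: the paper does not prove this theorem itself but cites it as standard (from Janson--\L{}uczak--Ruci\'nski, Chapter~2), and your argument is exactly the standard exponential-moment derivation found there --- Markov's inequality applied to $e^{tX}$, independence, the bound $1+x\le e^x$, and optimisation at $t=\log\beta$ (resp.\ $t=\log\alpha$), with the sign conditions supplied by $\beta>1$ and $0<\alpha<1$. No gaps; nothing further is needed.
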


\begin{proof}[Proof of \cref{lem:gnp_is_pseudo}]
  Note that we may assume $\gamma>0$ is arbitrarily small.
  Write $V=V(G)$.
  Fix disjoint $U,W$ with $|U|,|W|\ge\gamma n$ and write $x=|U||W|/n^2\ge\gamma^2$.
  Note that $X:=|E(U,W)|$ is a binomial random variable with $xn^2$ trials and success probability $p$.
  Thus by \cref{thm:chernoff} there exists $c=c(\gamma)>0$ such that
  \[
  \pr(|d(U,W)-p|\geq \gamma p) =
  \pr(|X-pxn^2| \geq \gamma pxn^2)
    \le 2\exp(-cpn^2).
  \]
  Taking $C=C(\gamma)$ to be large enough so that $C > 2/c$, say, we obtain by the union bound that
  \begin{align*}
  &\pr(
    \exists U,W\subseteq V,\ |U|,|W|\ge\gamma n:\ |d(U,W)-p| \geq \gamma p)
    \le 4^n\cdot e^{-2n} = o(1).  \qedhere
  \end{align*}
\end{proof}

With \cref{lem:gnp_is_pseudo}, the proof of \cref{thm:paths} is now complete.

\section{Extending paths to Hamilton cycles}
\label{sec:ham:ext}
The goal of this section is to give a general machinery to ``prepare'' a random graph (above the hamiltonicity threshold) in a way that any path found in some large portion of the graph can be extended, \whp{}, to a Hamilton cycle.
We will then use this machinery to extend the path obtained in \cref{thm:paths:pseudo} to a Hamilton cycle, proving \cref{thm:ham}.
Throughout this section, we assume that $n$ is large enough whenever needed. 
In addition, as the statement in \cref{thm:ham} is clearly monotone in $p$, we will conveniently assume throughout this section that $np-\log{n}-\log\log{n}\ll \log\log{n}$.

\begin{lemma}\label{lem:ham:ext}
  Let $\eps>0$, let $p=(\log{n}+\log{\log{n}}+\omega(1))/n$ and let $G\sim G(n,p)$.
  Then, \whp{}, there exists a partition $V(G)=V^\star\cup V'$ with $|V^\star|\le\eps n$ for which every path $P\subseteq V'$ with $|V(P)|\le 2n/3$ can be extended to a Hamilton cycle in $G$.
\end{lemma}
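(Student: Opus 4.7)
The plan is to choose $V^\star$ as a small set containing every ``poorly-behaved'' vertex of $G$, so that $V'$ inherits strong pseudorandom and expansion properties from $G(n,p)$; then, given a path $P\subseteq V'$ with $|V(P)|\le 2n/3$ and endpoints $u_1,u_2$, setting $U=V(G)\setminus V(P)$, I will find a Hamilton path of $G[U\cup\{u_1,u_2\}]$ from $u_1$ to $u_2$ by a Pósa rotation-extension argument carried out in two halves of $U$. Concatenating this Hamilton path with $P$ yields the required Hamilton cycle.

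For the definition of $V^\star$, let $S$ be the set of vertices of $G$ with degree below some threshold of the form $\theta=c(\eps)\log n$. Standard first-moment and Chernoff estimates, using $np=\log n+\log\log n+\omega(1)$, give $|S|=o(n)$ \whp{} provided $c(\eps)$ is chosen small enough. For each $v\in S$ I fix two of its $G$-neighbors $a_v,b_v$, and I also add to $V^\star$ every vertex with more than one neighbor in $S$; the resulting $V^\star$ is still of size $o(n)$, and in particular $|V^\star|\le\eps n$. Crucially, every $v\in S$ comes equipped with a designated pair of ``escorts'' $a_v,b_v\in V^\star$ that will serve as its two neighbors on the Hamilton cycle we eventually build.

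Given such a $P\subseteq V'$, set $U=V(G)\setminus V(P)$, so that $V^\star\subseteq U$ and $|U|\ge n/3$. I then randomly partition $U$ into two halves $U_1,U_2$ of equal size, subject to the constraint that each $v\in S$ lands in the same half as its companions $a_v,b_v$. Chernoff bounds together with the pseudorandomness of $G(n,p)$ imply that \whp{} a partition can be chosen satisfying, simultaneously for $i=1,2$: $(a)$ $u_i$ has $\Omega(np)$ neighbors in $U_i$; $(b)$ $\delta(G[U_i])\ge 2$, which for each $v\in S\cap U_i$ is ensured by the enforced co-location of $a_v,b_v$; and $(c)$ $G[U_i]$ satisfies the Pósa expansion condition that every subset of $U_i$ of size at most $|U_i|/(C\log n)$ has at least twice as many neighbors in $U_i$ as its own size.

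With this partition in hand, Pósa's rotation-extension applied inside each $G[U_i\cup\{u_i\}]$, starting at $u_i$, produces a linear-sized set $T_i\subseteq U_i$ of endpoints reachable by Hamilton paths of $G[U_i\cup\{u_i\}]$ that begin at $u_i$. The pseudorandomness of $G$ then provides an edge between $T_1$ and $T_2$, and using this edge one stitches the two half-paths into a Hamilton path of $G[U\cup\{u_1,u_2\}]$ from $u_1$ to $u_2$. The main technical obstacle is to verify $(a)$--$(c)$ \emph{uniformly} over the adversarial choice of $P$: since $P$ depends on $G$, one needs a union bound over the at most $2^n$ possible complements $U=V(G)\setminus V(P)$, backed by Chernoff estimates sharp enough to absorb it, while also handling the mild dependence introduced by forcing each $v\in S$ to remain with $a_v,b_v$ during the random split --- this is precisely why $|V^\star|$ must be kept small.
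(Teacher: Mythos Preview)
Your outline has the right overall shape---split the complement of $P$ into two halves, run P\'osa rotations in each half, and connect the two endpoint sets---but there is a genuine gap in the construction of $V^\star$ that makes properties (a) and (b) impossible to certify uniformly over the adversarial $P$.

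The set $V^\star$ you build (low-degree vertices $S$, two escorts per vertex of $S$, and vertices with several neighbours in $S$) has size $o(n)$. Consequently a typical vertex of $V'$ has only $o(np)$ neighbours in $V^\star$, and \emph{nothing prevents an adversarial path $P\subseteq V'$ from absorbing all remaining neighbours of some vertex}. Concretely, take $v\in V'$ with $d(v)$ just above your threshold $\theta=c(\eps)\log n$; an adversary can place all of $N(v)$ inside $V(P)$ (which has room for up to $2n/3$ vertices), leaving $d(v,U)=0$. If $v$ happens to be an endpoint $u_i$, property (a) fails outright; if $v\in U$, property (b) fails. You acknowledge that a union bound over the $2^n$ choices of $U$ is needed, but the relevant Chernoff tail here is only $e^{-\Omega(np)}=n^{-\Omega(1)}$ since $np\sim\log n$, which cannot absorb $2^n$. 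This is not a technicality: at the Hamiltonicity threshold the degree-into-an-adversarial-set problem genuinely cannot be handled by a global union bound.

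The paper resolves this by making $V^\star$ serve as a \emph{degree reservoir}: it fixes in advance two disjoint sets $U_1,U_2$ of size about $\eps' n$ each (found via the Lov\'asz Local Lemma, since a straightforward random choice also fails a union bound) with the property that every vertex of degree at least $\log n/10$ has $\Omega(\eps\log n)$ neighbours in \emph{each} $U_i$, while all low-degree vertices together with their entire neighbourhoods sit inside $U_1$. Then $V^\star=U_1\cup U_2$. Given $P\subseteq V'$, one sets $W_i=U_i\cup(\text{half of }V'\setminus V(P))$; because $U_i\subseteq V^\star$ is disjoint from $P$ by construction, every $v\in W_i$ automatically satisfies $d(v,W_i)\ge\min\{d(v),\Omega(\eps\log n)\}$, \emph{regardless of $P$}. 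This is exactly the missing ingredient in your argument.

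A second, smaller gap: even granting (a)--(c), P\'osa rotations applied to a longest path do not by themselves yield \emph{Hamilton} paths of $G[U_i\cup\{u_i\}]$; they only yield many endpoints of a longest path, which need not span $U_i$. The paper first proves that $G[W_i]$ is Hamiltonian via a sparse-subgraph-plus-boosters argument (so that the longest path \emph{is} a Hamilton path), and only then applies P\'osa's lemma. Moreover, your expansion condition (c), which holds only for sets of size at most $|U_i|/(C\log n)$, would give an endpoint set of size $\Theta(n/\log n)$ rather than the linear size you claim; the paper works with $(|W|/4,2)$-expansion instead.
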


The proof of \cref{lem:ham:ext} uses P\'osa's rotation--extension technique. 
Let us now recall some corollaries of P\'{o}sa's lemma~\cite{Pos76}.
For an overview of the rotation--extension technique, we refer the reader to~\cites{Kri16}. 
\begin{lemma}[P\'{o}sa's lemma~\cite{Pos76}]\label{lem:Posa}
Let $G$ be a graph, let $P = v_0,\dots,v_t$ be a longest path in $G$, and let $R$ be the set of all $v \in V(P)$ such that there exists a path $P'$ in $G$ with $V(P') = V(P)$ and with endpoints $v_0$ and $v$.
Then $|N(R)| \leq 2|R|-1$.
\end{lemma}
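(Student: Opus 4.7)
The plan is to follow the classical rotation--extension framework of P\'osa.

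First, by the maximality of $P$, any neighbour in $G$ of a vertex $v\in R$ must lie on $V(P)$. Indeed, if $v\in R$ and $u\in N(v)\setminus V(P)$, then any $P'\in\mathcal{P}$ with $V(P')=V(P)$ and endpoint $v$ could be extended via the edge $\{u,v\}$ to a strictly longer path in $G$, contradicting the maximality of $P$. Hence $N(R)\subseteq V(P)$.

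Next, for each $v\in R$ fix a representative path $P_v=w_0^vw_1^v\cdots w_t^v\in\mathcal{P}$ with $w_0^v=v_0$ and $w_t^v=v$. Let $u\in N(v)$ and write $u=w_i^v$. If $i=t-1$, then $u$ is simply the path-predecessor of $v$ on $P_v$. Otherwise $i<t-1$, and the rotation of $P_v$ at the edge $\{v,u\}$ yields a new path in $\mathcal{P}$ with endpoints $v_0$ and $w_{i+1}^v$; by the definition of $R$, this forces $w_{i+1}^v\in R$. Thus in either case $u$ is the $P_v$-predecessor of some element of $R$, so
\[
N(v)\subseteq A_v := \{w_i^v : 0\le i<t,\ w_{i+1}^v\in R\}.
\]
Since $v_0\notin R$, the map sending $r\in R$ to its predecessor on $P_v$ is a bijection $R\to A_v$, and in particular $|A_v|=|R|$.

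The remaining, and central, task is to bound $|\bigcup_{v\in R}A_v|\le 2|R|-1$. My plan is to choose the representatives $\{P_v\}_{v\in R}$ simultaneously via a BFS in the rotation graph (whose vertices are the paths in $\mathcal{P}$ and whose edges correspond to single rotations), rooted at $P$. Each non-root $P_v$ is then obtained from its parent $P_u$ by one rotation, which merely reverses a suffix of $P_u$; consequently the predecessor sets $A_u$ and $A_v$ agree on the unreversed prefix and differ only by a one-position shift along the reversed suffix. Propagating this relation down the BFS tree of size $|R|$, while exploiting the fact that the root endpoint $v_t\in R$ has no path-successor on $P$ (so one predecessor slot is ``saved'' at the top of the tree), should deliver the sharp bound $|\bigcup_v A_v|\le 2|R|-1$.

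The main obstacle is precisely this last combinatorial bookkeeping. The per-path equality $|A_v|=|R|$ together with a trivial union bound gives only $|R|^2$, and a less careful tracking of how predecessor sets merge across single rotations typically loses the $-1$ and yields a weaker estimate such as $2|R|$ or even $3|R|$. Extracting the tight constant $2|R|-1$ requires simultaneously exploiting the tree structure of the rotation BFS and the asymmetry introduced by the distinguished terminal $v_t$, which has no successor on $P$.
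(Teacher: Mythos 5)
Note first that the paper does not prove this lemma at all: it is quoted from P\'osa~\cite{Pos76}, with the survey~\cite{Kri16} indicated for the technique, so your attempt has to stand on its own --- and as it stands it is not a proof. Your preparatory steps are fine: $N(R)\subseteq V(P)$ by maximality of $P$, and for each $v\in R$ a single rotation of the representative $P_v$ gives $N(v)\subseteq A_v$, the set of $P_v$-predecessors of elements of $R$, a set of size exactly $|R|$ (here the paper's definition of $R$ via arbitrary paths with vertex set $V(P)$ conveniently makes $R$ closed under rotations of any representative). But this only yields $|N(R)|\le |R|^2$. The entire content of the lemma is the step you label a ``plan'': you never establish $|N(R)|\le 2|R|-1$, and you say yourself that the required bookkeeping is the main obstacle. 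Stopping exactly where the easy part ends is a genuine gap, not a proof.

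Beyond being incomplete, the plan aims at the wrong quantity and rests on an unjustified reachability assumption. The set $\bigcup_{v\in R}A_v$ consists of \emph{all} $P_v$-predecessors of $R$ over the chosen representatives; most of these vertices need not be neighbours of $R$ at all, elements of $R$ themselves occur as predecessors of other elements of $R$ along rotated paths, and nothing indicates that this union can be forced down to $2|R|-1$ by any choice of representatives. The classical argument does something different and sharper: it fixes the \emph{original} path $P$ and proves the structural statement $N(R)\subseteq R^{-}\cup R^{+}$, where $R^{-},R^{+}$ are the predecessors and successors of vertices of $R$ along $P$ itself. The engine is an invariant, proved by induction along the rotation sequence, that a vertex lying outside $R$ with no $P$-neighbour in $R$ keeps both of its $P$-edges (and acquires no new incident edges) in every path obtained from $P$ by rotations; from this one deduces that every neighbour of $R$ is a $P$-neighbour of a vertex of $R$, and the count $|R^{-}\cup R^{+}|\le 2|R|-1$ follows at once because $v_0\notin R$ while $v_t\in R$ has no successor --- this is where the $-1$ comes from, with no further bookkeeping. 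Finally, your representatives are to be produced by a BFS of the rotation graph rooted at $P$, and you assert this tree has size $|R|$; under the paper's definition of $R$ (endpoints of \emph{arbitrary} paths with vertex set $V(P)$ and endpoint $v_0$, not only those reachable from $P$ by rotations) you have not shown that every element of $R$ is rotation-reachable from $P$, so the tree need not cover $R$. Either prove the lemma for the rotation closure of $P$ (which is what the classical argument gives, and which suffices for the paper's application since that closure is a subset of $R$ and the lemma is only used to force $R$ to be large), or supply an additional argument for the remaining endpoints; in either case the missing ingredient is the invariant above, not suffix-reversal bookkeeping along a BFS tree.
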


Recall that a non-edge
$\{x,y\}$ of $G$ is called a booster if adding $\{x,y\}$ to $G$ creates a graph which is either Hamiltonian or whose longest path is longer than that of $G$.
For a positive integer $k$ and a positive real $\alpha$ we say that a graph $G=(V,E)$ is a \defn{$(k,\alpha)$-expander} if $|N(U)|\ge\alpha|U|$ for every set $U\subseteq V$ of at most $k$ vertices.
The following is a widely-used fact stating that $(k,2)$-expanders have many boosters. For a proof, see e.g.~\cite{Kri16}.
\begin{lemma}\label{lem:boosters}
  Let $G$ be a connected $(k,2)$-expander which contains no Hamilton cycle.
  Then $G$ has at least $(k+1)^2/2$ boosters. 
\end{lemma}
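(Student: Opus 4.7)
The plan is a double application of P\'osa's rotation--extension technique at the two endpoints of a longest path, followed by a counting step.

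First, I would take a longest path $P=v_0,\dots,v_t$ in $G$ and apply \cref{lem:Posa}, obtaining the set $R$ of all $v$ for which there is a path in $G$ with vertex set $V(P)$ and endpoints $v_0$ and $v$. The lemma gives $|N(R)|\le 2|R|-1$, while the $(k,2)$-expansion property would force $|N(R)|\ge 2|R|$ whenever $|R|\le k$; hence $|R|\ge k+1$. For each $v\in R$ I would then fix a specific path $P_v$ witnessing $v\in R$ (which is itself a longest path in $G$) and repeat the rotation argument with $v$ playing the role of the fixed endpoint. The same P\'osa--expansion reasoning yields a set $R_v$ of size at least $k+1$, consisting of all $u$ for which $G$ contains a path with vertex set $V(P)$ and endpoints $v$ and $u$.

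I then claim that every pair $\{u,v\}$ with $v\in R$ and $u\in R_v$ is a non-edge of $G$ whose addition either creates a Hamilton cycle or produces a longer path --- i.e., is a booster. Indeed, fix such a pair and let $Q$ be a path on $V(P)$ from $v$ to $u$. If $\{u,v\}\in E(G)$, then adding this edge to $Q$ produces a cycle on $V(P)$: when $V(P)=V(G)$ this is a Hamilton cycle, contradicting the hypothesis; when $V(P)\ne V(G)$, connectivity provides a vertex $w\notin V(P)$ adjacent to some $x\in V(P)$, and cutting the cycle at an edge incident to $x$ and then appending $w$ yields a path longer than $P$, contradicting maximality. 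Hence $\{u,v\}\notin E(G)$, and the same case analysis carried out in $G+\{u,v\}$ shows that $\{u,v\}$ is a booster.

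Finally, the number of ordered pairs $(v,u)$ with $v\in R$ and $u\in R_v$ is at least $(k+1)^2$, and each unordered booster pair $\{u,v\}$ is counted at most twice in this sum, yielding the desired bound of $(k+1)^2/2$ distinct boosters. The one delicate point is the second rotation: one must commit to a specific witness $P_v$ for each $v\in R$ in order to apply \cref{lem:Posa} honestly with $v$ as the fixed endpoint, and keep track that every resulting path still has vertex set exactly $V(P)$. Beyond this bookkeeping, the argument is a standard double application of rotation--extension.
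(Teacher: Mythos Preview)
The paper does not actually prove this lemma; it simply refers the reader to~\cite{Kri16}. Your argument is precisely the standard double-rotation proof one finds there: apply \cref{lem:Posa} once to force $|R|\ge k+1$, then for each $v\in R$ re-apply it from the other end of a witnessing longest path $P_v$ to get $|R_v|\ge k+1$, and count. The proof is correct, including the two points you flag as delicate (fixing a specific $P_v$ so that \cref{lem:Posa} applies verbatim, and the ``at most twice'' counting of unordered pairs).
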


We now move on to establish some useful properties satisfied \whp{} by $G(n,p)$ (for $p$ as in \cref{lem:ham:Hamiltonian_induced_subgraphs}).

\begin{lemma}\label{lem:gnp:prop}
  Let $\eps>0$ be sufficiently small, let $p=(\log{n}+\log{\log{n}}+\omega(1))/n$, and let $G\sim G(n,p)$.
  Then, \whp{},
  \begin{description}[leftmargin=!,labelwidth=\widthof{\bfseries (P1)}]
    \item[\namedlabel{P:min_max_degree}{(P1)}]
      $\delta(G) \geq 2$ and $\Delta(G) \leq 10\log n$;

    \item[\namedlabel{P:smalldist}{(P2)}]
      No vertex $v \in V(G)$ with $d(v) < \log n/10$ is contained in a $3$- or a $4$-cycle, and every two distinct vertices $u,v \in V(G)$ with $d(u),d(v) < \log n/10$ are at distance at least $5$ apart;
    
    \item[\namedlabel{P:sparse_small_sets}{(P3)}]
      Every set $U \subseteq V(G)$ of size at most $\varepsilon n/100$ spans at most $\varepsilon |U| \log n/10$ edges. 
    
    \item[\namedlabel{P:Us}{(P4)}]
      There exist disjoint sets $U_1,U_2 \subseteq V(G)$ with $|U_1|,|U_2|\le \eps n$ for which the following hold for every $v \in V(G)$:
      \begin{description}[leftmargin=!,labelwidth=\widthof{\bfseries (a)}]
        \item[\namedlabel{P:Us:a}{(a)}]
          If $d(v)\ge \log n/10$ then $d(v,U_1),d(v,U_2)\ge \eps \log n/100$;
        \item[\namedlabel{P:Us:b}{(b)}]
          If $d(v)\le \log n/10$ then $v$ and all of its neighbours are in $U_1$.
      \end{description}
  \end{description} 
\end{lemma}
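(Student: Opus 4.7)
My plan is to establish the four properties in sequence: (P1)--(P3) are routine Chernoff / first-moment calculations, while (P4) is then deduced essentially deterministically from them via a Lov\'asz Local Lemma argument.

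For (P1), the bound $\delta(G) \geq 2$ is the classical Erd\H{o}s--R\'enyi hitting-time result at exactly this threshold (the first-moment count of vertices of degree $\leq 1$ vanishes precisely because of the $\omega(1)$ in $np$), and $\Delta(G) \leq 10 \log n$ follows by applying Chernoff to each $d(v) \sim \Bin(n-1, p)$ and union-bounding over $v$. For (P2), I condition on $N(v)$: Chernoff gives $\pr(d(v) < \log n/10) \leq n^{-2/3}$, and conditional on $|N(v)| = k < \log n/10$, the probability that $N(v)$ spans an edge is at most $\binom{k}{2} p = O(\log^2 n / n)$ (a $3$-cycle through $v$) and the probability that two vertices of $N(v)$ share a common neighbor outside $\{v\}$ is at most $\binom{k}{2} n p^2 = O(\log^4 n / n)$ (a $4$-cycle through $v$); multiplying these and summing over $v$ gives $o(1)$, and analogous estimates on paths of length $\leq 4$ between two low-degree vertices handle the distance condition. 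For (P3), $|E(U)| \sim \Bin\bigl(\binom{k}{2}, p\bigr)$ has mean $\Theta(k^2 \log n / n)$, which the target $\eps k \log n/10$ beats by a factor $\Omega(n/k)$ once $k \leq \eps n/100$; Chernoff together with a union bound $\sum_k \binom{n}{k}$ yields $o(1)$ (the range of small $k$ where $\binom{k}{2} \leq \eps k \log n/10$ is trivial).

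For (P4), I first observe that by (P2), every vertex $v$ with $d(v) \geq \log n/10$ has at most two neighbors in $W \cup N(W)$, where $W$ denotes the set of vertices of degree $< \log n/10$: $v$ has at most one neighbor in $W$ itself (two such neighbors would be at graph distance $2$, violating the distance-$5$ condition), and at most one in $N(W) \sm W$ (two such neighbors sharing a common $w \in W$ would close a $4$-cycle through $w$, while two adjacent to distinct $w_1, w_2 \in W$ would place $w_1, w_2$ at distance $\leq 4$ via $v$). Consequently $d(v, V^\star) \geq d(v) - 2 \geq \log n/11$ for every high-degree $v$, where $V^\star := V \sm (W \cup N(W))$. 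A first-moment bound from (P2) yields $|W| \leq n^{1/2}$ \whp{}, so $|W \cup N(W)| = o(n)$, and I place $W \cup N(W)$ entirely into $U_1$. For the remaining quotas, I assign each vertex of $V^\star$ independently to $R_1$, $R_2$, or neither with probabilities $\eps/3,\ \eps/3,\ 1 - 2\eps/3$; then $d(v, R_i) \sim \Bin(d(v, V^\star), \eps/3)$ has mean $\geq \eps \log n/33$, and Chernoff bounds $\pr\bigl(d(v, R_i) < \eps \log n/100\bigr) \leq n^{-c\eps}$ for some absolute constant $c > 0$. Each such bad event depends only on the assignments within $N(v) \cap V^\star$, hence has dependency degree at most $\Delta(G)^2 = O(\log^2 n)$. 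The symmetric Lov\'asz Local Lemma (valid since $n^{-c\eps} \cdot O(\log^2 n) \to 0$) produces an assignment avoiding every bad event, while standard Chernoff concentration on $|R_i|$ keeps $|R_i| \leq \eps n/2$; setting $U_1 := (W \cup N(W)) \cup R_1$ and $U_2 := R_2$ completes the construction.

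The main obstacle is (P4): for small $\eps$ the per-vertex Chernoff failure probability $n^{-c\eps}$ is not $o(1/n)$, so a direct union bound cannot simultaneously control every high-degree vertex. The Local Lemma overcomes this because each bad event is \emph{locally} determined --- depending only on the random assignments within a single vertex's neighborhood in a graph of maximum degree $O(\log n)$ --- giving a polylogarithmic dependency degree that easily beats the polynomial $n^{c\eps}$ gain from Chernoff.
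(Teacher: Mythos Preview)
Your treatment of \ref{P:min_max_degree}--\ref{P:sparse_small_sets} matches the paper's, and for \ref{P:Us} you correctly identify the Local Lemma as the key device---exactly as the paper does. There is, however, a genuine gap in how you control $|R_i|$. The Local Lemma hands you \emph{one specific} assignment avoiding all the degree events $\cB_v$; it is not the random i.i.d.\ assignment, and so the Chernoff bound on $|R_i|$ (which is a statement about the random assignment) says nothing about the sizes in the assignment the LLL produces. As written, you could end up with an assignment where, say, every vertex of $V^\star$ lands in $R_1$. This is fixable: the LLL in fact gives the quantitative lower bound $\pr\big(\bigcap_v \overline{\cB_v}\big) \ge \prod_v(1-x_v) \ge (1-e\cdot n^{-c\eps})^n = e^{-O(n^{1-c\eps})}$, which dominates $\pr(|R_i|>\eps n/2) \le e^{-\Omega(\eps n)}$; subtracting, some good assignment also has small $R_i$. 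But you need to say this.

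The paper sidesteps the issue entirely with a cleaner device: rather than i.i.d.\ assignment, it partitions $V$ into ``blobs'' of size roughly $1/\eps$ and picks a uniformly random \emph{ordered pair} $(x_j^1,x_j^2)$ from each blob, setting $U_i' = \{x_j^i\}_j$. This makes $|U_i'|$ deterministically equal to the number of blobs (about $\eps n$), so no size control is needed at all, and the LLL is applied purely to the degree events. Your neighbour-count in $W\cup N(W)$ is also a bit loose: the same case analysis you give actually shows at most \emph{one} such neighbour (one in $W$ together with one in $N(W)\setminus W$ already forces either a short cycle through $W$ or two low-degree vertices at distance $\le 3$), which is what the paper uses; your bound of two is enough for the argument, though.
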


\begin{proof}[Proof of \ref{P:min_max_degree}]
  For the minimum degree see, e.g.,~\cite{FK}.
  For the maximum degree, since $d(v)\sim\Bin(n-1,p)$ we have
  \[
  \pr(d(v)\ge 10\log{n})
  \le \binom{n}{10\log{n}}p^{10\log{n}}
  \le \left(\frac{enp}{10\log{n}}\right)^{10\log{n}} \ll 1/n,
  \]
  and the statement follows by the union bound.
\end{proof}

\begin{proof}[Proof of \ref{P:smalldist}]
  Write $V=V(G)$ and $\alpha=1/10$.
  Let $1\le\ell\le 4$ and let $P=(v_0,\ldots,v_\ell)$ be a sequence of $\ell+1$ distinct vertices from $V$, where optionally $v_0=v_\ell$.
  Suppose first that $v_0\ne v_\ell$.
  Let $S_0=V\sm\{v_1,v_\ell\}$ and $S_\ell=V\sm\{v_0,v_{\ell-1}\}$.
  Let $\cA_P$ be the event that $P$ is contained in $G$, and for $i=0,\ell$ let $\cB_i$ be the event that $d(v_i,S_i)\le \alpha \log{n}$.
  By \cref{thm:chernoff} we obtain
  that $\pr(\cB_i)\le n^{-0.6}$.
  The events $\cA_P,\cB_0,\cB_\ell$ are mutually independent, hence   $\pr(\cA_P\land \cB_0\land \cB_\ell) \le p^\ell n^{-1.2}$.
  Let $\cA$ be the event that there exists a path $P=v_0,\ldots,v_\ell$ with $\ell\in[4]$ in $G$ such that $\cA_P$ and $d(v_0),d(v_\ell)\le\alpha \log{n}$.
  By the union bound, $\pr(\cA)\le \sum_{\ell=1}^4 n^{\ell+1-1.2}p^\ell=o(1)$.
  The case $v_0=v_\ell$ (which implies $\ell\in\{3,4\}$) is similar.
  Let $S=V\sm \{v_1,v_{\ell-1}\}$ and let $\cB$ be the event $d(v_0,S)\le\alpha\log{n}$.
  As before, $\pr(\cB)\le n^{-0.6}$, and the events $\cA_P,\cB$ are independent, hence $\pr(\cA_p\land \cB)\le p^\ell n^{-0.6}$.
  Let $\cA'$ be the event that there exists a cycle $P$ of length $\ell\in\{3,4\}$ such that $\cA_P$ and $d(v_0)\le\alpha\log{n}$.
  By the union bound, $\pr(\cA')\le \sum_{\ell=3}^4 n^\ell p^\ell n^{-0.6}=o(1)$.
\end{proof}

\begin{proof}[Proof of \ref{P:sparse_small_sets}]
For a given set $U \subseteq V(G)$ and for a given $k \geq 0$, the probability that $|E_G(U)| \geq k$ is at most 
\[
  \binom{\binom{|U|}{2}}{k} \cdot  p^{k} \leq \binom{|U|^2}{k} \cdot p^k \leq \left( \frac{e|U|^2p}{k} \right)^k.
\]
Hence, by the union bound, noting that $p\le 2\log{n}/n$, the probability that \ref{P:sparse_small_sets} does not hold is at most 
\begin{align*}
    \sum_{t = 1}^{\varepsilon n/100}{\binom{n}{t} \cdot 
    \left( \frac{et^2p}{\varepsilon t \log n/10} \right)^{\varepsilon t \log n/10} } 
    &\leq 
    \sum_{t = 1}^{\varepsilon n/100}{ \left( \frac{en}{t} \right)^t \cdot 
    \left( \frac{60t}{\varepsilon n} \right)^{\varepsilon t \log n/10} } \\ &=  
    \sum_{t = 1}^{\varepsilon n/100}{ \left( \frac{60e}{\varepsilon} \cdot \left( \frac{60t}{\varepsilon n} \right)^{\varepsilon \log n/10 - 1} \right)^t } \\ &\leq 
    \sum_{t = 1}^{\varepsilon n/100}{ \left( \frac{60e}{\varepsilon} \cdot 0.6^{\Omega(\varepsilon \log n)} \right)^t } 
    \\ &= 
    \sum_{t = 1}^{\varepsilon n/100}{o(1)^t} = o(1).\qedhere
\end{align*}
\end{proof}

\begin{proof}[Proof of \ref{P:Us}]
  The proof involves an application of the symmetric form of the Local Lemma (see, e.g.,~\cite{AS}*{Chapter~5}; a similar application appears in~\cite{HKS12}).
  Write $V=V(G)$ 
  and let $X=\{v\in V:\ d(v)\le \log{n}/10\}$.
  We start by observing that $X$ is typically small.
  Indeed, by \cref{thm:chernoff} we have $\pr(d(v)\le\log{n}/10)\le n^{-0.6}$, and by Markov's inequality $|X|\le n^{0.5}$ \whp{}.
  By the definition of $X$ we have that $X^+:=X\cup N(X)$ satisfies $|X^+|\le |X| \cdot \log n/10 \leq n^{0.6}$ \nolinebreak \whp{}.
  
  From now on we fix $G$, assuming that $|X^+| \leq n^{0.6}$ and that $G$ satisfies \ref{P:min_max_degree} and \ref{P:smalldist}; these events happen \whp{}.    
  Let $\eps'=1/(\ceil{1/\eps}+1)$ and note that for $\eps<1/2$ we have $\eps/2\le\eps'<\eps$.
  Write $s=1/\eps'$, let $t=\floor{n/s}\sim\eps' n$, and let $A_1,\ldots,A_t,Z$ be a partitioning of the vertices of $G$ into $t$ ``blobs'' $A_i$ of size $s$ and an extra set $Z$ with $|Z|\le s$.
  For $j\in[t]$, let $(x_j^1,x_j^2)$ be a uniformly chosen (ordered) pair of distinct vertices from $A_j$.
  For $i=1,2$ define $U_i'=\{x_j^i\}_{j=1}^t$.
  Clearly, $|U_1'|=|U_2'|=t$ and $U_1'\cap U_2'=\es$.
  For every $v\in V\sm X$, let $\cB_v$ be the event that 
  $d(v,U_i')<\eps'\log{n}/40$ for some $i=1,2$.
  For such $v$, let $L(v)$ be the set of blobs that contain neighbours of $v$,
  namely, $L(v)=\{A_i:\ N(v)\cap A_i\ne\es\}$.
  For $j\in[t]$ write $n_j(v)=|N(v)\cap A_j|$,
  and note that $\sum_j n_j(v) \ge d(v)-s \ge \log{n}/10 - s \geq \log{n}/20$ (for $n$ large enough).
  For $i=1,2$ and $j\in[t]$, let $\chi_j^i(v)$ be the indicator of the event that $x_j^i$ is a neighbour of $v$, and note that $\E\chi_j^i(v)=\eps' n_j(v)$.
  Observe that for $i=1,2$, $d(v,U_i')=\sum_j\chi_j^i(v)$, hence $\E[d(v,U_i')]=\eps' \sum_j n_j(v) \ge \eps'\log{n}/20$.
  Thus, by \cref{thm:chernoff}, $\pr(\cB_v)\le n^{-c}$ for some $c=c(\eps)>0$.

  For two distinct vertices $u,v\in V\sm X$ say that $u,v$ are \defn{related} if $L(u)\cap L(v)\ne\es$.
  For a vertex $u\in V\sm X$, let $R(u)$ be the set of vertices in $V\sm X$ which are related to $u$, and note that $|R(u)|\le s\Delta(G)^2$, which is, by \ref{P:min_max_degree}, at most $C\log^2{n}$ for some $C=C(\eps)>0$.
  Note that $\cB_u$ is mutually independent of the set of events $\{\cB_v\mid v\in (V\sm X)\sm R(u)\}$.
  We now apply the symmetric form of the Local Lemma\footnote{Note that in expectation there are $n^{\Omega(1)}$ vertices $v \in V \setminus X$ for which the event $\cB_v$ occurs. Hence, it is not true that \whp{} every vertex $v \in V \setminus X$ has high degree to both $U'_1$ and $U'_2$. One can then try to fix the situation for the (relatively few) ``unsatisfied" vertices by moving elements into $U'_1$ and $U'_2$ and between these sets. However, moving elements between $U'_1$ and $U'_2$ --- which might be necessary if for example some $v \in V \setminus X$ has all of its neighbours in $U'_1$ (and hence none in $U'_2$) --- can then affect the situation of other vertices. Seeing as the simple union-bound/alterations arguments do not work, we employ the Local Lemma.}: 
  observing that $e n^{-c} \cdot C\log^2{n}<1$ (for large enough $n$), we get that with positive probability, none of the events 
  $(\cB_v : v \in V \setminus X)$ occur, meaning that 
  $d(v,U_i') \geq \eps'\log{n}/40 \geq 
  \varepsilon\log n/80$ for every $v \in V \setminus X$ and $i = 1,2$.  
  We choose $U_1',U_2'$ to satisfy this.
  Now define $U_1=U_1'\cup X^+$ and $U_2=U_2'\sm X^+$, and note that from the discussion above, $|U_1|,|U_2|\sim \eps' n \le \eps n$.
  Let $v\in V\sm X$.
  The fact that $G$ satisfies \ref{P:smalldist} implies that $v$ has at most $1$ neighbour in $X^+$.
  Thus, for every $v\in V\sm X$ it holds that $d(v,U_1)\ge d(v,U_1')\ge \eps\log{n}/100$ and $d(v,U_2)\ge d(v,U_2')-1\ge \eps\log{n}/100$, as required. 
\end{proof}

In the proof of \cref{lem:ham:ext}, we will argue that \whp{} $G \sim G(n,p)$ is such that every subset $W \subseteq V(G)$ possessing certain properties induces a Hamiltonian graph. To this end, we will use the fact that given such a set $W$ and a relatively sparse expander $H$ on $W$ which is a subgraph of $G$, it is highly likely that there is an edge $e$ of $G$ which is a booster with respect to $H$.
This fact is established in \cref{lem:gnp:props_hitting_boosters} below.
In the proof of \cref{lem:gnp:props_hitting_boosters} we will use the well-known and easy-to-show fact that if a graph $H$ is a $(|V(H)|/4,2)$-expander then $H$ is connected.
Indeed, if (by contradiction) $H$ is not connected, then take a connected component $X$ of size at most $|V(H)|/2$ and a set $U \subseteq X$ of size $\min\{|V(H)|/4,|X|\}$, and observe that $|N(U)| \leq |X| - |U| < 2|U|$, contradicting the assumption that $H$ is a $(|V(H)|/4,2)$-expander. 

\begin{lemma}\label{lem:gnp:props_hitting_boosters}
  Let $c > 0$ be a sufficiently small absolute constant ($c = 10^{-5}$ suffices), let $p=(\log{n}+\log{\log{n}}+\omega(1))/n$ and let $G\sim G(n,p)$. Then, \whp{}, $G$ satisfies the following: for every $W \subseteq V(G)$ of size $|W| \geq 0.1n$ and for every $(|W|/4,2)$-expander $H$ on $W$ which is a subgraph of $G$ and has at most $c n \log n$ edges, if $H$ is not Hamiltonian then $G$ contains a booster with respect to $H$. 
\end{lemma}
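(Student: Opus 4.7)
The plan is a direct first-moment / union-bound argument. Call a pair $(W,H)$ \emph{bad} if $W\subseteq V(G)$ with $|W|\ge 0.1n$, $H$ is a $(|W|/4,2)$-expander on $W$, $H$ is non-Hamiltonian, $H\subseteq G$, $|E(H)|\le cn\log n$, and $G$ contains no booster with respect to $H$. I would show that for $c>0$ a sufficiently small absolute constant, the expected number of bad pairs is $o(1)$; the lemma then follows from Markov's inequality (or equivalently, the union bound).

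Fix a candidate pair $(W,H)$. Since $H$ is a $(|W|/4,2)$-expander it is connected, by the observation preceding the lemma statement, and being non-Hamiltonian it falls under \cref{lem:boosters}, which produces a set $B(H)$ of at least $(|W|/4+1)^2/2\ge n^2/3200$ boosters (using $|W|\ge 0.1n$). Crucially, every booster is by definition a non-edge of $H$, so the sets $E(H)$ and $B(H)$ are disjoint sets of vertex pairs. The two conditions ``all of $E(H)$ lies in $E(G)$'' and ``none of $B(H)$ lies in $E(G)$'' are therefore independent in $G(n,p)$, and the probability that the pair $(W,H)$ is bad is at most
\[
p^{|E(H)|}(1-p)^{|B(H)|}\;\le\; p^{|E(H)|}\exp\!\left(-\tfrac{pn^2}{3200}\right).
\]

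Summing over candidates, the number of choices for $W$ is at most $2^n$; for each such $W$ and each $1\le m\le cn\log n$, the number of graphs on $W$ with exactly $m$ edges is at most $\binom{\binom{|W|}{2}}{m}$, and the standard estimate $\binom{\binom{n}{2}}{m}p^m\le \left(\tfrac{en^2p}{2m}\right)^m$ absorbs the $p^{|E(H)|}$ factor. Hence the expected number of bad pairs is bounded by
\[
2^n\sum_{m=1}^{cn\log n}\left(\frac{en^2p}{2m}\right)^{m}\exp\!\left(-\tfrac{pn^2}{3200}\right).
\]
Since $p\le 2\log n/n$ by the standing hypothesis of the section, the booster factor is $\exp(-\Omega(n\log n))$, which easily absorbs $2^n=e^{O(n)}$. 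On the allowed range of $m$ the summand is maximised at $m=cn\log n$ (the unconstrained maximiser $m^\star=n^2p/e=\Theta(n\log n)$ sits above $cn\log n$ when $c$ is small), producing a factor of $(O(1/c))^{cn\log n}=\exp\!\bigl(O(c\log(1/c))\cdot n\log n\bigr)$.

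The only real difficulty is quantitative: one must choose $c$ small enough that this entropic factor is strictly dominated by the savings $\exp(-\Omega(n\log n))$ coming from the forced absence of all boosters. Since $c\log(1/c)\to 0$ as $c\to 0^+$, any sufficiently small absolute constant $c$ works; in particular the value $c=10^{-5}$ advertised in the statement is more than enough. Everything else is routine bookkeeping.
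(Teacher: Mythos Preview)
Your proposal is correct and follows essentially the same first-moment argument as the paper: bound the expected number of bad pairs $(W,H)$ by $2^n$ times an entropy term $\sum_m \binom{\binom{n}{2}}{m}p^m$ times the booster-avoidance factor $(1-p)^{n^2/3200}$, observe the summand is increasing up to $m=cn\log n$, and choose $c$ small so that $c\log(1/c)$ is beaten by the constant $1/3200$. One small slip of attribution: the booster factor $\exp(-pn^2/3200)=\exp(-\Omega(n\log n))$ comes from the \emph{lower} bound $p\ge(\log n)/n$, not the upper bound you cite; the upper bound $p\le 2(\log n)/n$ is instead what makes $(en^2p/(2m))^m|_{m=cn\log n}=(O(1/c))^{cn\log n}$ work.
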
 
\begin{proof}
  We use a first moment argument. Evidently, the number of choices for the set $W$ is at most $2^n$.
  Let us fix a choice of $W$. For each $t$, the number of choices of $H$ for which $|E(H)| = t$ is at most 
  \[
  \binom{\binom{|W|}{2}}{t} \leq \binom{n^2}{t} \leq 
  \left( \frac{en^2}{t} \right)^t.
  \]
  Now let $H$ be a non-Hamiltonian $(|W|/4,2)$-expander on $W$, and set $t := |E(H)|$.
  As mentioned above, $H$ is connected.
  By \cref{lem:boosters}, $H$ has at least $(|W|/4)^2/2 = |W|^2/32 \geq n^2/3200$ boosters. Now, the probability that $G$ contains $H$ but no booster thereof is at most 
  \[
  p^t \cdot (1 - p)^{n^2/3200} \leq 
  p^t \cdot \left( 1 - \frac{\log n}{n} \right)^{n^2/3200} \leq 
  \left( \frac{2\log n}{n} \right)^t \cdot \exp\left( {-n\log n/3200} \right). 
  \]
  Summing over all choices of $W$ and $H$, we see that the 
  probability that the assertion of the lemma does not hold is at most
  \begin{equation}\label{eq:hitting_expander_boosters_union_bound}
  2^n \cdot \exp\left( {-n\log n/3200} \right) \cdot 
  \sum_{t = 1}^{cn\log n}{\left( \frac{2en\log n}{t} \right)^t}. 
  \end{equation}
  Setting $g(t) := (2en\log n/t)^t$, we note that $g'(t) = g(t) \cdot \left( \log(2en\log n/t) - 1 \right) > 0$ for every $t$ in the range of the sum in \eqref{eq:hitting_expander_boosters_union_bound}, assuming $c<1$, say. Thus, this sum is not larger than
  \[
  c n\log n \cdot 
  \left( 2e/c \right)^{c n\log n} = 
  \exp \left( {\left( \log(2e/c) \cdot c + o(1) \right) n \log n} \right).
  \]
  Now, if $c$ is small enough so that $\log(2e/c) \cdot c < 1/3200$, we get that \eqref{eq:hitting_expander_boosters_union_bound} tends to $0$ as $n$ tends to infinity. This completes the proof. 
\end{proof}

The following lemma states that a graph possessing certain simple properties is necessarily an expander. Statements of this type are fairly common in the study of Hamiltonicity of random graphs (see, e.g.,~\cite{Kri16}). For completeness, we include a proof. 
\begin{lemma}\label{lem:expander_sufficient_conditions}
Let $m,d \geq 1$ be integers and let $H$ be a graph on $h\ge 4m$ vertices satisfying the following properties:
\begin{enumerate}
    \item $\delta(H) \geq 2$;
    \item No vertex $v \in V(H)$ with $d(v) < d$ is contained in a $3$- or a $4$-cycle, and every two distinct vertices $u,v \in V(H)$ with $d(u),d(v) < d$ are at distance at least $5$ apart;
    \item Every set $U \subseteq V(H)$ of size at most $5 m$ contains at most $d|U| / 10$ edges;
    \item There is an edge between every pair of disjoint sets $U_1,U_2 \subseteq V(H)$ of size $m$ each.
\end{enumerate}
Then $H$ is an $(h/4,2)$-expander. 
\end{lemma}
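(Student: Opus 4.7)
The plan is to proceed by contradiction: suppose $U \subseteq V(H)$ satisfies $|U| \leq h/4$ but $|N(U)| \leq 2|U| - 1$, and write $U^+ := U \cup N(U)$, so that $|U^+| \leq 3|U| - 1$. I would split according to the size of $U$.

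If $|U| \geq m$, I would invoke property 4 directly: the set $W := V(H) \setminus U^+$ has size at least $h - 3|U| + 1 \geq h/4 + 1 \geq m$, so property 4 produces an edge between $U$ and $W$, contradicting $W \cap N(U) = \emptyset$.

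If $|U| < m$, then $|U^+| \leq 3m \leq 5m$, so property 3 gives $|E(U^+)| \leq d|U^+|/10$. I would partition $U = U_H \sqcup U_L$ according to whether the degree is at least $d$ or strictly less, writing $t = |U_H|$ and $\ell = |U_L|$. If $\ell = 0$ (all high-degree), the bound $d|U| \leq \sum_{v \in U} d(v) \leq 2|E(U^+)| \leq d|U^+|/5 \leq 3d|U|/5$ is an immediate contradiction. If $t = 0$ (all low-degree), property 2 (pairwise distance at least $5$ between low-degree vertices) forces the neighbourhoods of $U_L$ to be pairwise disjoint and disjoint from $U_L$, giving $|N(U)| \geq \sum_{v \in U_L} d(v) \geq 2\ell = 2|U|$, also a contradiction.

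The mixed case ($t, \ell \geq 1$) is the main obstacle. The key observation, a direct consequence of property 2, is that each $u \in U_H$ has at most one common neighbour with vertices of $U_L$: were there two such common neighbours $w_1, w_2$ witnessed by $v_1, v_2 \in U_L$, then either $v_1 = v_2$ would lie on a $3$- or $4$-cycle through $u, w_1, w_2$, or $v_1 \neq v_2$ and the walk $v_1 - w_1 - u - w_2 - v_2$ would give $d_H(v_1, v_2) \leq 4$, both forbidden by property 2. This yields the bound $|E(U_H, N(U_L))| \leq t$. I would then apply property 3 twice: first to $U_H \cup N(U_H)$, yielding $|N(U_H) \setminus U_H| \geq 4t$ (so in particular the set is well-defined and small); second to $U_H \cup D$ where $D := N(U_H) \setminus U \setminus N(U_L)$. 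Combining the lower bound $|E(U_H, D)| \geq |E(U_H, V \setminus U)| - |E(U_H, N(U_L))| \geq (4d/5 - 1)t - t = (4d/5 - 2)t$ with the property 3 ceiling on $|E(U_H \cup D)|$ gives $|D| \geq (7 - 20/d)t$, which is at least $3t$ in the intended regime (say $d \geq 10$). Together with $|N(U_L) \setminus U| \geq 2\ell - t$, coming from property 2 and the bound $|N(U_L) \cap U_H| \leq t$, this yields $|N(U)| \geq |D| + |N(U_L) \setminus U| \geq 3t + (2\ell - t) = 2|U|$, the desired contradiction.
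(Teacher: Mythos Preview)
Your overall strategy matches the paper's: split according to whether $|U|\ge m$ (use property~4) or $|U|<m$ (decompose $U$ into high- and low-degree parts $U_H,U_L$ and use properties~1--3). Your treatment of the large-$|U|$ case and of the pure sub-cases $\ell=0$, $t=0$ is correct and essentially identical to the paper's.

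In the mixed case there are two issues. First, the bound $|E(U_H,V\setminus U)|\ge(4d/5-1)t$ is asserted but not derived from either of the two applications of property~3 you list; it does follow, but requires applying property~3 to $U_H$ alone (giving $|E(U_H)|\le dt/10$) together with the observation from property~2 that each $u\in U_H$ has at most one neighbour in $U_L$ (so $|E(U_H,U_L)|\le t$). Second, and more seriously, your final inequality $|D|\ge(7-20/d)t$ only yields $|D|\ge 3t$ when $d\ge 5$. For $d\in\{3,4\}$ the set $U_L$ can be non-empty (vertices of degree exactly~$2$ or~$3$) and your argument does not close; since the lemma is stated for all $d\ge 1$, this is a genuine gap.

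The paper's route in the mixed case is shorter and works for every $d\ge 1$. It proves $|N(U_H)|\ge 4t$ exactly as in your ``first application'', then uses a slight strengthening of your key observation: each $u\in U_H$ has at most one neighbour in $U_L\cup N(U_L)$ (not just in $N(U_L)$). This gives $|N(U_H)\cap(U_L\cup N(U_L))|\le t$, hence
\[
|N(U)| \;\ge\; |N(U_L)\setminus U_H| + |N(U_H)\setminus(U_L\cup N(U_L))|
\;\ge\; (2\ell - t) + (4t - t) \;=\; 2|U|.
\]
In other words, your ``first application'' already delivers the bound you need; the second application via $E(U_H,D)$ is an unnecessary detour and is precisely what introduces the restriction on $d$.
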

\begin{proof}
  Our goal is to show that for every $U \subseteq V(H)$ with $|U| \le h/4$ it holds that $|N(U)| \geq 2|U|$.
  So let $U \subseteq V(H)$ be such that $|U| \leq h/4$.
  Suppose first that $|U| \geq m$. Since there evidently is no edge between $U$ and $V(H) \sm (U \cup N(U))$, it must be the case that $|V(H) \sm (U \cup N(U)| < m$ by Item~4. So we have $|U \cup N(U)| > h - m$ and hence $|N(U)| > h - m - |U| \geq \frac{3}{4}h - m \geq \frac{1}{2}h \geq 2|U|$, as required. Here we used the assumption that $h \geq 4m$ as well as the fact that $|U| \leq h/4$.
  
  Suppose now that $|U| \leq m$. Let $X$ be the set of all $u \in U$ satisfying $d(u) < d$, and set $Y := U \sm X$. We claim that $|N(Y)| \geq 4|Y|$. Suppose, for the sake of contradiction, that $|N(Y)| < 4|Y|$. Then $|Y \cup N(Y)| < 5|Y| \leq 5|U| \leq 5m$. On the other hand, the definition of $Y$ implies that $H$ has at least $d|Y|/2$ edges incident to vertices of $Y$. Since all of these edges are contained in $Y \cup N(Y)$, we see that $Y \cup N(Y)$ contains at least $d|Y|/2 > d \cdot |Y \cup N(Y)|/10$ edges. But this stands in contradiction with Item~3. Thus, $|N(Y)| \geq 4|Y|$. 
  
  Next, note that by Item~2, every two elements of $X$ are at distance at least $5$; in particular, $X$ is an independent set, and every two elements of $X$ have disjoint neighbourhoods. Now Item~1 implies that $|N(X)| \geq 2|X|$. 
  
  Observe that each vertex of $Y$ has at most one neighbour in $X \cup N(X)$, for otherwise there would be a $4$-cycle containing an element of $X$ or a pair of elements of $X$ at distance at most $4$, both of which are impossible due to Item~2. So we conclude that $|N(Y) \cap (X \cup N(X))| \leq |Y|$, and hence $|N(Y) \sm (X \cup N(X))| \geq |N(Y)| - |Y|$. All in all, we get that
 \begin{align*}
  |N(U)| &= |N(X) \sm Y| + |N(Y) \sm (X \cup N(X))| \geq 
  |N(X)| - |Y| + |N(Y) \sm (X \cup N(X))| 
  \\&\geq 
  2|X| - |Y| + |N(Y)| - |Y| \geq 2|X| + 2|Y| = 2|U|,
 \end{align*}
  as required. 
  \end{proof}

\noindent
The following lemma constitutes the main part of the proof of \cref{lem:ham:ext}.
\begin{lemma}\label{lem:ham:Hamiltonian_induced_subgraphs}
Let $\eps > 0$.
For $p=(\log{n}+\log{\log{n}}+\omega(1))/n$, the random graph $G\sim G(n,p)$ satisfies the following \whp{}.
Let $W\subseteq V(G)$ be such that $|W| \geq 0.1n$, and for every $v\in W$ it holds that 
$d(v,W)\ge \min\{ d(v), \eps \log n\}$. Then
for every $w \in W$ there exists $Y\subseteq W$ with $|Y|\ge n/40$ such that for each $y\in Y$, there is a Hamilton path in $G[W]$ whose endpoints are $w$ and $y$.
\end{lemma}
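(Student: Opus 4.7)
The plan is to combine the expander sufficient conditions of \cref{lem:expander_sufficient_conditions} with the booster lemma (\cref{lem:gnp:props_hitting_boosters}) and P\'osa's rotation--extension technique (\cref{lem:Posa}) to produce, for each prescribed vertex $w \in W$, a large set of distinct endpoints of Hamilton paths in $G[W]$ starting at~$w$. I begin by conditioning on $G$ satisfying properties \ref{P:min_max_degree}--\ref{P:sparse_small_sets} of \cref{lem:gnp:prop}, the $(\gamma,p)$-pseudorandomness of \cref{lem:gnp_is_pseudo} for a sufficiently small $\gamma = \gamma(\varepsilon) > 0$, and the booster property of \cref{lem:gnp:props_hitting_boosters}; each of these events holds whp.

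The first step is to verify that $G[W]$ is a $(|W|/4,2)$-expander, by checking the four hypotheses of \cref{lem:expander_sufficient_conditions} with parameters $h = |W|$, $d = \varepsilon\log n$, and $m = \gamma n$. Condition~(1) follows from $d_{G[W]}(v) = d(v,W) \geq \min\{d(v),\varepsilon\log n\} \geq 2$ via \ref{P:min_max_degree}. For condition~(2), any $v \in W$ with $d_{G[W]}(v) < \varepsilon\log n$ must satisfy $d_G(v) < \varepsilon\log n \leq \log n/10$ (for $\varepsilon$ small) and have all its $G$-neighbours inside $W$, so the property is inherited directly from \ref{P:smalldist}. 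Condition~(3) follows from \ref{P:sparse_small_sets} since $5m = 5\gamma n \leq \varepsilon n/100$ and $d/10 = \varepsilon\log n/10$. Condition~(4) follows from $(\gamma,p)$-pseudorandomness, which guarantees an edge in $G[W]$ between any two disjoint subsets of $W$ of size at least $\gamma n$.

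Next, I argue that $G[W]$ is Hamiltonian. I extract a sparse $(|W|/4,2)$-expander subgraph $H_0 \subseteq G[W]$ on vertex set~$W$ with $|E(H_0)| \leq cn\log n - n$, via a standard sparsification of $G[W]$ that preserves the four expansion conditions above. While $H_i$ is not Hamiltonian, \cref{lem:gnp:props_hitting_boosters} yields a booster $e_i$ of $H_i$ in $G$, which must lie in $G[W]$ since both its endpoints are in $W$; I set $H_{i+1} := H_i + e_i$. Each booster strictly increases the longest-path length of $H_i$, so after at most $|W|$ iterations the process terminates with a Hamilton cycle $C \subseteq G[W]$, while $|E(H_i)| \leq cn\log n$ throughout, keeping \cref{lem:gnp:props_hitting_boosters} applicable.

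Finally, fix $w \in W$ and let $P$ be the Hamilton path obtained from $C$ by removing one of the two edges of $C$ incident to~$w$. I apply P\'osa's rotations to $P$ keeping $w$ fixed, and let $R$ be the set of vertices $y \in W$ reachable as the other endpoint of a Hamilton path of $G[W]$ starting at $w$ via some sequence of rotations from~$P$. By \cref{lem:Posa}, $|N_{G[W]}(R)| \leq 2|R| - 1$, while the $(|W|/4,2)$-expansion of $G[W]$ forces $|R| > |W|/4 \geq n/40$; taking $Y := R$ yields the desired set. The main obstacle is the Hamiltonicity step: since $G[W]$ may carry $\Theta(n\log n)$ edges with a leading constant much larger than~$c$, the booster lemma cannot be invoked on $G[W]$ directly, and some care is needed to extract a sufficiently sparse expander subgraph $H_0$ while maintaining all four conditions of \cref{lem:expander_sufficient_conditions}.
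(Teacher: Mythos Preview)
Your plan is correct and matches the paper's approach. The paper carries out the sparsification you flag by having each $v\in W$ retain a uniformly random set of $\min\{d(v,W),\,\varepsilon\log n\}$ incident edges of $G[W]$ and then verifying conditions~1--4 of \cref{lem:expander_sufficient_conditions} for the resulting $H_0$ directly (condition~4 via a union bound over the $\le 4^n$ pairs of $m$-sets, each of which carries $\Omega(n\log n)$ edges in $G$ by pseudorandomness and so survives sparsification with probability $1-e^{-\Omega(n\log n)}$); your separate verification that $G[W]$ itself satisfies conditions~1--4 is correct but unnecessary, since $H_0\subseteq G[W]$ already yields the expansion needed for the final P\'osa step.
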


\begin{proof}
  We may and will assume $\eps$ is sufficiently small (it is enough to have $\eps \leq \min\{1/10,c/2\}$, where $c$ is the constant from \cref{lem:gnp:props_hitting_boosters}).
    We will assume that the events defined in \cref{lem:gnp_is_pseudo}, \cref{lem:gnp:prop} and \cref{lem:gnp:props_hitting_boosters} hold (this happens \whp{}), and show that in this case, the assertion of \cref{lem:ham:Hamiltonian_induced_subgraphs} holds as well. 

    It will be convenient to set $d_0 := \eps \log n$.
    Let $W \subseteq V(G)$ be as in the statement of \cref{lem:ham:Hamiltonian_induced_subgraphs}.
    We select a random spanning subgraph $H$ of $G[W]$ as follows. For each $v \in W$, if $d(v,W) < d_0$ then add to $H$ all edges of $G[W]$ incident to $v$. Otherwise, namely if $d(v,W) \geq d_0$, then randomly select a set of $d_0$ edges of $G[W]$ incident to $v$ and add these to $H$. Note that $|E(H)| \leq |W| \cdot d_0 \leq \eps n \log n$. On the other hand, our assumption that $d(v,W)\ge \min\{ d(v), \eps \log n\}$ for every $v \in W$ implies that 
    $\delta(H) \geq \min\{\delta(G), d_0\}$.
    Hence, as $d_0 \geq 2$ (for large enough $n$), we have $\delta(H) \geq 2$ by Property \ref{P:min_max_degree}
      of \cref{lem:gnp:prop}.
    
    We claim that with positive probability (in fact, \whp), $H$ is a $(|W|/4,2)$-expander. 
    In light of \cref{lem:expander_sufficient_conditions},
    it is sufficient to show that with positive probability, $H$ satisfies Conditions 1--4 in that lemma.
    Here, we will choose the parameters of \cref{lem:expander_sufficient_conditions} as $d := d_0$ and $m := \eps n/500$.
    We already showed that $\delta(H) \geq 2$ (which is Condition 1 in \cref{lem:expander_sufficient_conditions}).
    Condition 2 holds because $H$ is a subgraph of $G$ and because the analogous statement holds for $G$, as guaranteed by Property \ref{P:smalldist} in \cref{lem:gnp:prop} (here we assume that $\eps \leq 1/10$). Similarly, Condition 3 holds because $H$ is a subgraph of $G$ and due to Property \ref{P:sparse_small_sets} in \cref{lem:gnp:prop} (note that $5m = \varepsilon n /100$). 

    Let us now prove that Condition 4 holds. Let 
    $U_1,U_2 \subseteq V(H) = W$ be disjoint sets satisfying $|U_1|,|U_2| = m = \eps n/500$. Since $G$ is $(\gamma,p)$-pseudorandom with $\gamma = \varepsilon/500$ (in fact, with $\gamma = o(1)$, see \cref{lem:gnp_is_pseudo}), \nolinebreak we \nolinebreak have 
    \begin{equation}\label{eq:sparse_expander_expansion}
    |E_G(U_1,U_2)| \geq 
    (1 - \gamma)p \cdot |U_1||U_2| \geq \frac{|U_1||U_2|\log n}{2n} \geq \frac{\eps^2 n \log n}{500000} = \Omega(n\log n)
    \; .
    \end{equation}
    Now, let us bound (from above) the probability that $|E_H(U_1,U_2)| = 0$ (where the randomness is with respect to the choice of $H$).
    Recall that $H$ is defined by choosing, for each $v \in W$, a random set $E(v)$ of $\min\{d(v,W),d_0\}$ edges of $G[W]$ incident to $v$, with all choices made uniformly and independently, and letting $E(H) = \bigcup_{v \in W}{E(v)}$. 
    Fix any $u_1 \in U_1$ with $d(u_1,U_2) \geq 1$, and let $\mathcal{A}_{u_1}$ be the event that 
    there is no edge in $E(u_1)$ with an endpoint in $U_2$. 
    Observe that if 
    $d(u_1,W) < d_0$ then $\pr(\mathcal{A}_{u_1}) = 0$, and otherwise
     \begin{align*}
    \pr(\mathcal{A}_{u_1}) &= \binom{d(u_1,W)-d(u_1,U_2)}{d_0}/\binom{d(u_1,W)}{d_0} = 
    \prod_{i=0}^{d_0-1}{\frac{d(u_1,W) - d(u_1,U_2) - i}{d(u_1,W)-i}} 
    \\&\leq 
    \left(  
    1 - \frac{d(u_1,U_2)}{d(u_1,W)}
    \right)^{d_0} 
    \leq
    \left(  
    1 - \frac{d(u_1,U_2)}{\Delta(G)}
    \right)^{d_0} 
    \leq 
    e^{-d(u_1,U_2) \cdot \frac{d_0}{\Delta(G)}} \leq 
    e^{-\varepsilon d(u_1,U_2)/10} \;.
    \end{align*}
    Here, in the last inequality we used Property \ref{P:min_max_degree} in \cref{lem:gnp:prop}.
    Note that the events \linebreak $(\mathcal{A}_{u_1} : u_1 \in U_1)$ are independent, and that if $E_H(U_1,U_2) = \es$ then $\mathcal{A}_{u_1}$ occurred for every $u_1 \in U_1$ with $d(u_1,U_2) \geq 1$. It now follows that
    \[
    \pr\left( E_H(U_1,U_2) = \es \right) \leq 
    \exp \left( -\frac{\eps}{10} \cdot \sum_{u_1 \in U_1}{d(u_1,U_2)} \right) = 
    \exp \left( -\frac{\eps}{10} \cdot |E_G(U_1,U_2)| \right) \leq 
    e^{-\Omega(n \log n)},
    \]
    where in the last inequality we used \eqref{eq:sparse_expander_expansion}. 
    By taking the union bound over all at most $2^{2n}$ choices of $U_1,U_2$, we see that with high probability, 
    $E_H(U_1,U_2) \neq \es$ for every pair of disjoint sets $U_1,U_2 \subseteq W$ of size $m$ each.
    
    Finally, we apply \cref{lem:expander_sufficient_conditions} to conclude that \whp{} $H$ is a $(|W|/4,2)$-expander.
    From now on, we fix such a choice of $H$.
    Before establishing the assertion of the lemma, we first show that $G[W]$ is Hamiltonian. 
    To find a Hamilton cycle in $G[W]$, we define a sequence of graphs $H_i$, $i \geq 0$, as follows.
    To begin, set $H_0 = H$.
    For each $i \geq 0$, if $H_i$ is Hamiltonian then stop, and otherwise take a booster of $H_i$ contained in $G[W]$ and add it to $H_i$ to obtain $H_{i+1}$.
    That such a booster exists is guaranteed by \cref{lem:gnp:props_hitting_boosters}, as we will always have $|E(H_i)| \leq |E(H)| + |W| \leq |E(H)| + n \leq \eps n\log n + n \leq c/2 \cdot n\log n + n \leq c n \log n$, provided that $\eps$ is smaller than $c/2$, where $c$ is the constant appearing in \cref{lem:gnp:props_hitting_boosters}.
    Note also that $H_i$ is a subgraph of $G[W]$ for each $i \geq 0$. 
    Evidently, this process has to stop (because as long as $H_i$ is not Hamiltonian, the maximum length of a path in $H_i$ is longer than in $H_{i-1}$), thus showing that $G[W]$ must contain a Hamilton cycle, as claimed.
    
    Now let $w \in W$. As $G[W]$ is Hamiltonian, there exists a Hamilton path $P$ of $G[W]$ such that $w$ is one of the endpoints of $P$.
    Evidently, $P$ is a longest path in $G[W]$. Furthermore, note that $G[W]$ is a $(|W|/4,2)$-expander because $H$, a subgraph of $G[W]$, is such an expander. Let $R$ be the set of all $y \in V(P) = W$ such that there exists a Hamilton path $P'$ in $G[W]$ with endpoints $w$ and $y$.
    By \cref{lem:Posa}, we have $|N_{G[W]}(R)| \leq 2|R| - 1$. Now, since $G[W]$ is a $(|W|/4,2)$-expander, it must be the case that $|R| > |W|/4 \geq n/40$. So we see that the assertion of the lemma holds with $Y = R$. This completes the proof.  
\end{proof}

\begin{proof}[Proof of \cref{lem:ham:ext}]
  For convenience we show the existence of a partition $V(G)=V^\star\cup V'$ with $|V^\star|\le 2\eps n$ instead of $|V^\star|\le\eps n$ (this clearly does not matter).
  We assume that $G$ satisfies the properties detailed in \cref{lem:gnp:prop}, and that it is a $(\gamma,p)$-pseudorandom for $\gamma<1/40$ and some $p\in(0,1)$,
  as guaranteed to happen \whp{} by \cref{lem:gnp_is_pseudo}.
  Let $U_1,U_2$ be disjoint subsets of $V=V(G)$ satisfying \ref{P:Us}.
  Set $V^\star= U_1\cup U_2$ and $V'=V\sm V^\star$, and let $P\subseteq V'$ be a path with $|V(P)|\le 2n/3$ and endpoints $a_1,a_2$.
  In particular, $|V^\star|\le 2\eps n$.
  Our goal is to extend $P$ to a Hamilton cycle of $G$.
  Write $V''=V'\sm V(P)$, partition $V''=V''_1\cup V''_2$ as equally as possible.
  For $i=1,2$, let $W_i=V''_i\cup U_i$ and choose a neighbour $w_i$ of $a_i$ in $W_i$; this is possible since $d(a_i,U_i)\ge \eps\log{n}/100$ by \ref{P:Us}.
  Note that $|W_i|\ge n/6$ and for every $v\in W_i$ it holds that $d(v,W_i)\ge \min\{ d(v), \eps\log{n}/100\}$, hence by \cref{lem:ham:Hamiltonian_induced_subgraphs} there exists a set $Y_i\subseteq W_i$ with $|Y_i|\ge n/40$ such that for every $y\in Y_i$ there is a Hamilton path spanning $W_i$ from $w_i$ to $y$.
  Since $G$ is a $(\gamma,p)$-pseudorandom for $\gamma<1/40$, it has an edge $e$ between $Y_1$ and $Y_2$ with endpoints $y_i\in Y_i$, say.
  For $i=1,2$, denote by $Q_{y_i}$ the Hamilton path between $w_i$ and $y_i$.
  We now construct a Hamilton cycle of $G$ as follows (as depicted in \cref{fig:ham:ext}):
  \[
    a_1 \to w_1 \xrightarrow{Q_{y_1}} y_1 \xrightarrow{e} y_2 \xrightarrow{Q_{y_2}} w_2
    \to a_2 \xrightarrow{P} a_1. \qedhere
  \]
\end{proof}

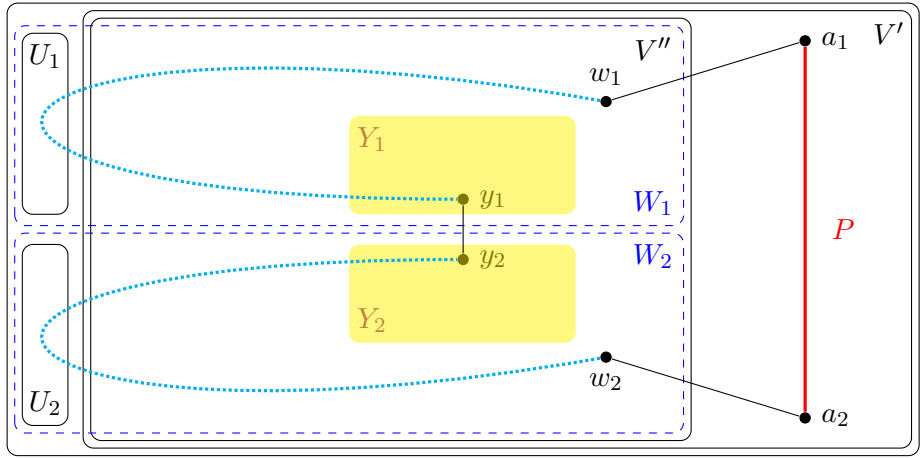
\begin{figure*}[t]
  \captionsetup{width=0.879\textwidth,font=small}
  \centering

\usetikzlibrary{calc}

\tikzset{
  vertex/.style={fill,circle,inner sep=1.5pt},
}

\begin{tikzpicture}
\def\xa{0cm}
\def\xb{1cm}
\def\xm{6cm}
\def\xc{9cm}
\def\xd{12cm}
\def\xm{6cm}
\def\cC{4.5cm}
\def\xP{10.5cm}

\def\ya{0cm}
\def\yb{3cm}
\def\yc{6cm}
\def\yCa{1.5cm}
\def\yCb{4.5cm}

\def\epl{0.5cm}
\def\eps{0.1cm}

\clip ({\xa-\epl},{\ya-\epl}) rectangle ({\xd+\epl},{\yc+\epl});

\draw[rounded corners] (\xa,\ya) rectangle (\xd,\yc);

\draw[rounded corners]
  ({\xa+2*\eps},{\yb+2*\eps}) rectangle ({\xb-2*\eps},{\yc-4*\eps});
\node (U1) at ({\xa+5*\eps},{\yc-7*\eps}) {$U_1$};

\draw[rounded corners]
  ({\xa+2*\eps},{\ya+4*\eps}) rectangle ({\xb-2*\eps},{\yb-2*\eps});
\node (U2) at ({\xa+5*\eps},{\ya+7*\eps}) {$U_2$};

\draw[rounded corners]
  ({\xb},{\ya+\eps}) rectangle ({\xd-\eps},{\yc-\eps});
\node (V') at ({\xd-4*\eps},{\yc-4*\eps}) {$V'$};

\node[vertex,label={0:$a_1$}] (a1) at (\xP,\yc-\epl) {};
\node[vertex,label={0:$a_2$}] (a2) at (\xP,\ya+\epl) {};
\draw[very thick,red] (a1) -- (a2);
\node[red] (P) at (\xP+\epl,\yb) {$P$};

\draw[rounded corners]
  ({\xb+\eps},{\ya+2*\eps}) rectangle ({\xc},{\yc-2*\eps});
\node (V'') at ({\xc-5*\eps},{\yc-6*\eps}) {$V''$};

\draw[rounded corners,blue,dashed]
  ({\xa+\eps},{\yb+0.5*\eps}) rectangle ({\xc-\eps},{\yc-3*\eps});
\node[blue] (W1) at ({\xc-5*\eps},{\yb+3.5*\eps}) {$W_1$};

\draw[rounded corners,blue,dashed]
  ({\xa+\eps},{\ya+3*\eps}) rectangle ({\xc-\eps},{\yb-0.5*\eps});
\node[blue] (W2) at ({\xc-5*\eps},{\yb-3.5*\eps}) {$W_2$};

\node[vertex,label={90:$w_1$}] (w1)
  at ($(\cC,\yCb)+(15:{7*\epl} and {1.5*\epl})$) {};
\draw (a1) -- (w1);

\node[vertex,label={-90:$w_2$}] (w2)
  at ($(\cC,\yCa)+(-15:{7*\epl} and {1.5*\epl})$) {};
\draw (a2) -- (w2);

\node (x1)
  at ($(\cC,\yCb)+(-15:{7*\epl} and {1.5*\epl})$) {};

\node[vertex,label={0:$y_1$}] (y1) at (\xm,\yb+4*\eps) {};
\fill[rounded corners,yellow,opacity=0.5]
  ($(y1)-(3*\epl,2*\eps)$) rectangle ($(x1)+(-4*\eps,2*\eps)$);
\node[brown] (Y1) at (\xm+3*\eps-3*\epl,\yCb-3*\eps) {$Y_1$};
\draw[very thick,cyan,densely dotted] (w1) to[out=170,in=180,looseness=9.5] (y1);

\node (x2)
  at ($(\cC,\yCa)+(15:{7*\epl} and {1.5*\epl})$) {};

\node[vertex,label={0:$y_2$}] (y2) at (\xm,\yb-4*\eps) {};
\fill[rounded corners,yellow,opacity=0.5]
  ($(y2)-(3*\epl,-2*\eps)$) rectangle ($(x2)+(-4*\eps,-2*\eps)$);
\node[brown] (Y2) at (\xm+3*\eps-3*\epl,\yCa+3*\eps) {$Y_2$};
\draw[very thick,cyan,densely dotted] (w2) to[out=190,in=180,looseness=9.5] (y2);

\draw (y1) -- (y2);

\end{tikzpicture}
  \caption{Outline of the proof of \cref{lem:ham:ext}.}
  \label{fig:ham:ext}
\end{figure*}

\noindent
We now put together \cref{thm:paths:pseudo} and \cref{lem:ham:ext} in order to prove \cref{thm:ham}. 

\begin{proof}[Proof of \cref{thm:ham}]
  Let $r\ge 2$, $\eps>0$ and $p=(\log{n}+\log{\log{n}}+\omega(1))/n$, let $G\sim G(n,p)$ and consider an $r$-colouring of the edge set of $G$.
  Let $\gamma$ be the constant obtained from \cref{thm:paths:pseudo} by plugging in $r$ and $\eps$.
  Let $V^\star\cup V'$ be the partition guaranteed \whp{} by \cref{lem:ham:ext} which satisfies $n'=|V'|\ge(1-\eps) n$.
  By \cref{lem:gnp_is_pseudo} we know that $G$ is $(\gamma(1-\eps),p)$-pseudorandom (\whp{}), hence $G'=G[V']$ is $(\gamma,p)$-pseudorandom.
  By \cref{thm:paths:pseudo} we know that there exists a path $P$ in $G'$ of length at most $2n'/(r+1)\le 2n/3$ having at least $(2/(r+1)-\eps)n' \ge (2/(r+1)-2\eps)n$ edges of the same colour.
  By \cref{lem:ham:ext} we can, \whp{}, extend $P$ into a Hamilton cycle of $G$, still having at least $(2/(r+1)-2\eps)n$ edges of the same colour.
\end{proof}

\section{Perfect matchings}
We now sketch a proof of \cref{thm:pm}.
The first observation is that with mild modifications of the proof of \cref{lem:ham:ext} we may prove a variant of the following form.
Let $\eps>0$ and $p=(\log{n}+\omega(1))/n$.
Then $G\sim G(n,p)$ \whp{} admits a partition of its vertex set $V(G)=V^\star\cup V'$ with $|V^\star|\le \eps n$ such that {\bf (a)} the set $D_1$ of vertices of degree $1$ in $G$ and its neighbourhood $N(D_1)$ are contained in $V'$;
{\em and} {\bf (b)} for every subset $X$ of $V'$ with $|X|\le 2n/3$ and $D_1\subseteq X$, the subgraph $G[V^\star\cup (V\sm X)]$ contains a Hamilton path.
We omit the proof details.

Having that lemma in hand, we proceed as follows.
Let $M_0$ be the set of edges incident to vertices of $D_1$; there are, \whp{}, $O(\log{n})$ such edges, and they form, \whp{}, a matching.
As $G[V'\sm V(M_0)]$ is (\whp{}) $(\gamma,p)$-pseudorandom by \cref{lem:gnp_is_pseudo}, we know by \cref{thm:paths:pseudo} that it has an almost monochromatic path $P$ of length $(2/(r+1)-\eps)n$, from which we can extract a monochromatic matching of size at least $(1/(r+1)-\eps')n$, for some $\eps'>0$.
Add it to $M_0$, creating an almost monochromatic matching $M_1$ of size at least $(1/(r+1)-\eps')n$.
We now apply the lemma to find a Hamilton path in $G[V^\star\cup (V'\sm V(M_1))]$, from which we extract a matching which completes $M_1$ into a perfect matching, in which at least $(1/(r+1)-\eps')n$ edges are of the same colour. \qed

\acknowledgements{The authors wish to thank the anonymous referees for their careful reading of the paper and useful suggestions which improved its presentation.}

\bibliography{library}

\end{document}